\documentclass{amsart}

\usepackage{graphicx}
\usepackage{amsmath,amssymb,amsfonts}
\usepackage{mathtools}
\usepackage{bm}
\usepackage{mathrsfs}
\usepackage[numbers]{natbib}
\usepackage{xcolor}
\usepackage[hidelinks]{hyperref}

\newtheorem{theorem}{Theorem}[section]
\newtheorem{lemma}[theorem]{Lemma}
\newtheorem{proposition}[theorem]{Proposition}
\newtheorem{corollary}[theorem]{Corollary}
\theoremstyle{definition}
\newtheorem{definition}[theorem]{Definition}
\newtheorem{remark}[theorem]{Remark}
\numberwithin{equation}{section}

\allowdisplaybreaks

\newcommand{\N}{\mathbb N}
\newcommand{\Sph}{\mathbb S}
\newcommand{\supp}{\operatorname{supp}}

\newcommand{\dd}{\,\mathrm d}

\newcommand{\Pmax}{\mathcal P}
\newcommand{\MHL}{\mathcal M}
\newcommand{\sigmaK}{\sigma_k}

\title[Sharp Besov--variation embeddings]
{Sharp embeddings between quasi-Banach Besov spaces and shallow ReLU variation spaces}
\author[Y. Li]{Yuwen Li}
\address{School of Mathematical Sciences, Zhejiang University, 866 Yuhangtang Road,
Hangzhou, Zhejiang 310058, People's Republic of China}
\email{liyuwen@zju.edu.cn}
\author[Y. Wang]{Yupeng Wang}
\address{School of Mathematical Sciences, Zhejiang University, 866 Yuhangtang Road,
Hangzhou, Zhejiang 310058, People's Republic of China}
\email{yupengw@zju.edu.cn}
\subjclass[2020]{Primary 46E35; Secondary 41A46, 42B25}
\keywords{Besov space, variation space, shallow neural network,
rectified power unit, embedding theorem, quasi-Banach space}
\date{}

\begin{document}

\begin{abstract}
Let $\mathbb{D}$ be the ridge dictionary generated by
$\operatorname{ReLU}^k$ on a bounded Lipschitz domain
$\Omega\subset\mathbb{R}^d$. We establish sharp embeddings between
isotropic Besov spaces and the associated variation space
$\mathcal L_1(\mathbb{D})$ in the quasi-Banach range $0<p\leq 1$.
Specifically,
\[
B^s_{p,q}(\Omega)\hookrightarrow \mathcal L_1(\mathbb{D})
\]
when $s\geq k+d/p$ for $0<q\leq 1$, and when $s>k+d/p$ for
$1<q\le\infty$. A rescaled-bump construction shows that this smoothness
threshold is sharp. Conversely, for $0<p<1$,
\[
\mathcal L_1(\mathbb{D})\hookrightarrow B^{k+1}_{p,2}(\Omega),
\]
and both the smoothness $k+1$ and the fine index $2$ are optimal.
The forward embedding converts Besov regularity of functions and solutions
of partial differential equations into quantitative finite-width
approximation bounds and greedy-algorithm convergence guarantees for
shallow $\operatorname{ReLU}^k$ networks. The proofs use
Littlewood--Paley localization, Fourier--Radon representations,
measure-valued derivatives, and vector-valued singular integrals.
\end{abstract}

\maketitle

\section{Introduction}\label{sec:introduction}
Neural-network-based PDE solvers currently constitute an important class of nonlinear numerical 
methods for partial differential equations (PDEs). These methods replace a
prescribed mesh or linear trial space by a trainable network class and are
therefore attractive for high-dimensional problems. Their best achievable
accuracy, however, is governed by how efficiently the underlying PDE
solution can be represented by networks of controlled width and
coefficient size. It is thus natural to seek a functional-analytic
benchmark for shallow-network approximation.

We fix integers $d\geq1$ and $k\in\mathbb N_+$, a bounded Lipschitz domain
$\Omega\subset\mathbb{R}^d$, set
$\sigma_k(t):=\max\left\{t,0\right\}^k$, and choose $c>0$ sufficiently
large. Here and below,
$\mathbb S^{d-1}:=\{\omega\in\mathbb{R}^d:|\omega|=1\}$ denotes the unit
$(d-1)$-sphere. The normalized ridge dictionary is
\[
 \mathbb{D}:=\{\sigma_k(\omega\cdot{}-b):
 (\omega,b)\in\mathbb S^{d-1}\times[-c,c]\}.
\]
For a function $f$ on $\Omega$, the associated variation norm is
\[
 \|f\|_{\mathcal L_1(\mathbb{D})}
 :=\inf\left\{\|\mu\|_{\mathrm{TV}}:
 f(x)=\int_{\mathbb S^{d-1}\times[-c,c]}
 \sigma_k(\omega\cdot x-b)\,\dd\mu(\omega,b)\right\}.
\]
The infimum is taken over all finite signed Borel measures $\mu$ on
$\mathbb S^{d-1}\times[-c,c]$ that represent $f$ on $\Omega$.
Thus $\mathcal L_1(\mathbb{D})$ is an infinite-width, coefficient-$\ell^1$
model of shallow $\operatorname{ReLU}^k$ networks. It is the natural
atomic space associated with $\mathbb{D}$ and connects Fourier and convex
descriptions of network complexity with Radon-transform and ridge-spline
formulations
\cite{Barron1993,Bach2017,Savarese2019,Ongie2020,ParhiNowak2021}.
With the above normalization, $\mathcal L_1(\mathbb{D})$ is also a Barron-type
variation space: the total variation of the representing measure is the
infinite-width analogue of an outer $\ell^1$ or path-norm budget, up to
conventions concerning parameter normalization and polynomial terms.
Related Barron-space approximation, representation, and embedding results
can be found in
\cite{EMaWu2022,EWojtowytsch2022,WuBarronEmbedding2023}.

The regularity scale relevant to nonlinear approximation is supplied by
Besov spaces. If $\{\Delta_j\}_{j\geq0}$ is an inhomogeneous
Littlewood--Paley resolution, then, up to equivalence of quasi-norms
\cite{Triebel,FrazierJawerth1990},
\[
 \|f\|_{B_{p,q}^s(\mathbb{R}^d)}
 =\left(\sum_{j=0}^\infty
2^{jsq}\|\Delta_jf\|^q_{L^p(\mathbb{R}^d)}
 \right)^{1/q},
\]
with the usual modification for $q=\infty$. On $\Omega$, we use the
restriction-space quasi-norm. Sobolev regularity records global
differentiability, whereas Besov regularity also captures localized and
multiscale behavior. Results of Dahlke and DeVore show that solutions of
elliptic boundary value problems on Lipschitz and polygonal domains may
possess substantially stronger regularity in approximation-relevant
Besov scales than in the corresponding Sobolev scales
\cite{DahlkeDeVore,DahlkePolygonal}. This suggests that the neural
representability of an elliptic solution should be related to its Besov
regularity.

Diagonal Besov spaces $B^s_{p,p}(\Omega)$ with $p<1$ also play an
important role in adaptive finite element approximation. Their
quasi-norms capture the spatial sparsity of corner and edge
singularities, which is precisely the structure exploited by locally
refined meshes. Besov regularity is therefore closely connected with
best adaptive approximation rates and with the approximation classes
that govern quasi-optimal adaptive methods
\cite{BDDP,GaspozMorin2009,Gantumur2017}. This connection explains why
the quasi-Banach range $p<1$ arises naturally in PDE approximation. 

The variation norm is directly relevant to finite networks as well. Let
\[
 \Sigma_n(\mathbb{D}):=\left\{\sum_{i=1}^n a_i v_i:
 a_i\in\mathbb{R},\ v_i\in\mathbb{D}\right\}.
\]
For every $2\leq r<\infty$, \cite[Theorem~3]{SiegelXuSharp} gives
\begin{equation}\label{eq:SiegelXu}
 \inf_{g_n\in\Sigma_n(\mathbb{D})}\|f-g_n\|_{L^r(\Omega)}
 \lesssim
 n^{-1/2-(k+1/r)/d}\|f\|_{\mathcal{L}_1(\mathbb{D})}.
\end{equation}
Since the unit ball of $\mathcal L_1(\mathbb{D})$ is the closed absolutely convex
hull of $\mathbb{D}$, it is the natural target class for dictionary-greedy
algorithms. The pure greedy algorithm, also known as matching pursuit,
selects an atom of maximal residual correlation. The relaxed greedy
algorithm updates by a relaxed convex combination, whereas the
orthogonal greedy algorithm recomputes the best approximation in the
span of all selected atoms. For a uniformly bounded dictionary in a
Hilbert space $H$, the relaxed and orthogonal algorithms explicitly construct
$f_n\in\Sigma_n(\mathbb{D})$ satisfying
\cite{DeVoreTemlyakov1996,BarronCohenDahmenDeVore2008}
\begin{equation*}
 \|f-f_n\|_H\lesssim n^{-1/2}\|f\|_{\mathcal L_1(\mathbb{D})}.
\end{equation*}
For the compact dictionary $\mathbb{D}$, the orthogonal greedy algorithm satisfies the improved entropy-based error bound \cite{SiegelXuOrthogonalGreedy,LiSiegel2024Greedy}:
\begin{equation}\label{eq:OGA}
 \|f-f_n\|_H\lesssim \varepsilon_n(\mathcal{K}(\mathbb{D}))_H\|f\|_{\mathcal L_1(\mathbb{D})},
\end{equation}
where $\varepsilon_n(\mathcal{K}(\mathbb{D}))_H$ denotes the $n$th metric
entropy number  of the closed absolutely convex hull of $\mathbb{D}$.
Such an estimate was extended to the Chebyshev greedy algorithm in
uniformly smooth Banach spaces 
\cite{Li2025Chebyshev}. The space
$\mathcal L_1(\mathbb{D})$ is also used in
the convergence analysis of greedy algorithms for rational
approximation~\cite{LiLi2025rEIM,LiZikatanovZuo2024}.

The preceding considerations lead to a general question: to what extent
does isotropic Besov regularity control the variation norm of a shallow
ridge representation, and what Besov regularity is forced in the reverse
direction? Direct
finite-width approximation of classical Sobolev and Besov classes has
been studied for deep $\operatorname{ReLU}$ networks and shallow
$\operatorname{ReLU}^k$ networks
\cite{SiegelDeep2023,MaoSiegelXu2024}. Related high-order approximation
results for spectral Barron classes and sharp rates for shallow variation
spaces appear in \cite{SiegelXuHighOrder,SiegelXuSharp}.
Li, Liu, and Shi~\cite{LiLiuShi2026} analyze $L^r$-density ridge-integral
spaces for $1\leq r\leq2$ and derive shallow-network approximation bounds
and Sobolev-space estimates through spectral Barron embeddings. Going
beyond finite-width approximation estimates, we establish a continuous
embedding into the infinite-width atomic space $\mathcal L_1(\mathbb{D})$.

On the representation-space side, standard shallow variation spaces have
been characterized in~\cite{SiegelXuVariation}, and closely related
Radon-transform and ridge-spline descriptions appear in
\cite{Ongie2020,ParhiNowak2021,Unser2023}. He and Tian~\cite{HeTian2026}
recently introduced Radon-domain $L^r$ ridge-integral spaces. They identify
the case $r=2$ with the critical Sobolev space
$H^{k+(d+1)/2}$, establish a sharp Sobolev sandwich for
$1<r<\infty$, and derive sampling-based approximation rates. These
coefficient-density spaces, Fourier-defined neural
spaces~\cite{LiaoMing2025}, intrinsic weighted variation spaces
\cite{DeVoreWeighted2025}, and generalized Barron spaces with weighted
parameter measures~\cite{LuZhang2026} provide complementary
representation norms. We resolve the sharp embedding problem for the
standard measure-valued $\operatorname{ReLU}^k$ variation space in the
quasi-Banach Besov range $p<1$.

Our main interest is the forward direction. For a given function, and
especially for a solution of a PDE, Besov regularity is often known or can
be derived from established regularity theory, whereas membership in
$\mathcal L_1(\mathbb{D})$ and quantitative control of the variation norm are
typically much less understood. The forward embedding therefore converts
available analytic regularity into a controlled shallow-network
representation and finite-width approximation guarantees. The converse
embedding identifies the optimal Besov smoothness and fine index forced
by this variation norm.

To the best of our knowledge, these are the first sharp embeddings
between quasi-Banach Besov spaces and the standard measure-valued shallow
$\operatorname{ReLU}^k$ variation space. Our first result gives the
critical Besov-to-variation embedding and its above-critical extension.

\begin{theorem}\label{Thm:BesovEmbeddingVariation}
Let $\Omega\subset\mathbb{R}^d$ be a bounded Lipschitz domain and let
$0<p\leq1$. Assume that
\begin{equation*}
 s>k+\frac{d}{p}\text{ and }0<q\leq\infty,
 \qquad\text{or}\qquad
 s=k+\frac{d}{p}\ \text{and}\ 0<q\leq1.
\end{equation*}
Then $B_{p,q}^s(\Omega)\hookrightarrow\mathcal L_1(\mathbb{D})$.
\end{theorem}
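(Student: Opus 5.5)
We reduce the statement to a bound on $\mathbb{R}^d$ and then control the variation norm one Littlewood--Paley block at a time. By the restriction definition of $B^s_{p,q}(\Omega)$, it suffices to show that every $F\in B^s_{p,q}(\mathbb{R}^d)$ satisfies $\|F|_\Omega\|_{\mathcal L_1(\mathbb{D})}\lesssim\|F\|_{B^s_{p,q}(\mathbb{R}^d)}$. We may first multiply $F$ by a fixed cutoff $\chi\in C_c^\infty$ with $\chi=1$ on $\Omega$; pointwise multipliers are bounded on $B^s_{p,q}$ even for $p<1$.

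In the critical case, $B^s_{p,q}\hookrightarrow B^s_{p,1}$ for $q\le1$. In the supercritical case, $B^s_{p,q}\hookrightarrow B^{k+d/p}_{p,\min(p,1)}$ for $s>k+d/p$ by the elementary embedding that trades smoothness for the fine index. Since $p\le 1$, in both cases it suffices to prove
\[
\|F|_\Omega\|_{\mathcal L_1(\mathbb{D})}\lesssim\sum_{j\ge0}2^{j(k+d/p)}\|\Delta_jF\|_{L^p}.
\]
Because $\mathcal L_1(\mathbb{D})$ is a Banach space (the TV infimum is a norm and $F$ is continuous by the embedding into $C^k$), the decomposition $F=\sum_j\Delta_jF$ converges in $\mathcal L_1(\mathbb D)$ once this series is finite. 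So the whole task reduces to a single-block estimate: for $g$ with $\widehat g$ supported in the annulus $|\xi|\sim2^j$ (or the ball when $j=0$),
\[
\|g|_\Omega\|_{\mathcal L_1(\mathbb{D})}\lesssim 2^{j(k+d/p)}\|g\|_{L^p}.
\]

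For the single-block estimate we rescale and use a Fourier--Radon representation. By Nikolskii's inequality for band-limited functions with $p\le1$, $\|g\|_{L^1}\lesssim 2^{jd(1/p-1)}\|g\|_{L^p}$. It therefore suffices to show $\|g|_\Omega\|_{\mathcal L_1}\lesssim 2^{j(k+d)}\|g\|_{L^1}$ for band-limited $g$. To see this, write $g=g*\psi_j$, where $\psi_j=2^{jd}\psi(2^j\cdot)$ is a Schwartz function with $\widehat\psi=1$ on the annulus. Then $\|g\|_{\mathcal L_1}\le\|g\|_{L^1}\sup_y\|\psi_j(\cdot-y)|_\Omega\|_{\mathcal L_1}$, because a superposition of translates is controlled by the TV of the mixing measure. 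This reduces everything to one atom: show $\|\psi_j(\cdot-y)|_\Omega\|_{\mathcal L_1}\lesssim2^{j(k+d)}$ uniformly in $y$.

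For a Schwartz function $\phi$, the Fourier slice theorem together with the Taylor/Peano kernel of $\sigma_k$ gives the ridge representation
\[
\phi(x)=\int_{\mathbb S^{d-1}}\int_{\mathbb R}\rho(\omega,b)\,\sigma_k(\omega\cdot x-b)\,db\,d\omega+\text{(polynomial of degree }\le k),
\]
with $\rho=c_{d,k}\,\partial_b^{k+1}\Lambda^{d-1}R\phi$. Here $\Lambda^{d-1}$ is the (possibly nonlocal) ramp filter in $b$. The polynomial terms of degree $\le k$ lie in $\mathcal L_1(\mathbb{D})$ on bounded $\Omega$ once $c$ is large, since $t^m$ for $m\le k$ is a finite combination of $\sigma_k(t-b_i)$ on a bounded interval, up to using $\sigma_k(-t)$ and the symmetry $\omega\mapsto-\omega$. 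The integral in $b$ is then truncated to $[-c,c]$: for $|b|$ beyond $\sup_\Omega|x|$, the ridge $\sigma_k(\omega\cdot x-b)$ is either $0$ or a polynomial on $\Omega$, which is absorbed into the polynomial term. Scaling $\phi=\psi_j(\cdot-y)$ gives $\|\rho\|_{L^1}\sim2^{j(k+1)}\cdot2^{j(d-1)}=2^{j(k+d)}$, exactly as required. The translation $y$ only shifts $b$, and the polynomial coefficients stay bounded, since they are moments of $\rho$ weighted by powers of $b$ within $[-c,c]$ and the $L^1$ bound controls them.

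The main obstacle is this atom estimate, and in particular uniformity over translations $y$ far from $\Omega$ together with the tail of $\rho$ when $d$ is even. In even dimensions $\Lambda^{d-1}$ is a Hilbert-transform-type operator, so $\rho$ is not compactly supported in $b$. We must then show that its $L^1(db)$ tail over $|b|>c$ contributes only polynomials on $\Omega$ with coefficients $\lesssim2^{j(k+d)}$, using the cancellation of $\psi$ (vanishing moments of $\widehat\psi$ at $0$ when $j\ge1$). We would first try to exploit the moment conditions so that $\partial_b^{k+1}\Lambda^{d-1}$ decays like $|2^jb|^{-N}$. The alternative, if that fails, is to subtract the Taylor polynomial of the atom at a point of $\Omega$ and represent only the remainder. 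The $j=0$ block has no cancellation, but it has fixed frequency scale and is handled by a direct smooth-function argument.
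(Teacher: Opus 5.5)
Your outer reduction (both cases reduced to a bound by $\sum_j2^{j(k+d/p)}\|\Delta_jF\|_{L^p}$, then summing blocks in the Banach space $\mathcal L_1(\mathbb{D})$) is the paper's Step~2. Your single-block estimate, however, takes a genuinely different route.

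\textbf{The paper's route.} It splits $\Delta_jf_e$ via the Peetre maximal function into compactly supported atoms at scale $2^{-j}$, with $\ell^p$ coefficient bound $2^{jd/p}\|g\|_{L^p}$. It bounds each atom by $2^{jk}$ (Lemma~\ref{lem:atom-variation}) and then uses $\ell^p\subset\ell^1$.

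\textbf{Your route.} You pay $2^{jd(1/p-1)}$ once through Nikolskii's inequality to land in $L^1$. You then write $g=g*\psi_j$ as an $L^1$-superposition of translates of one Schwartz atom of variation norm $2^{j(k+d)}$. The superposition bound is legitimate: $y\mapsto\psi_j(\cdot-y)|_\Omega$ is continuous into $L^2(\Omega)$, and every multiple of $\mathcal K(\mathbb{D})$ is closed and absolutely convex there, so Hahn--Banach applies. This avoids the atomic decomposition and the coefficient bookkeeping, and $p\le1$ enters only through Nikolskii. The price is that your atom is not compactly supported, so you need its variation bound uniformly over all $y\in\mathbb{R}^d$.

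\textbf{The gap.} That uniformity is where your write-up, as it stands, is wrong. Suppose you use the Peano formula from $-\infty$ and truncate to $b\in[-c,c]$. The discarded piece $b<-c$ contributes $\int_{-\infty}^{-c}\rho(\omega,b)(\omega\cdot x-b)^k\,\dd b$. Its coefficients are moments of $\rho$ over the half-line $(-\infty,-c)$, not over $[-c,c]$, and they are not controlled by $\|\rho\|_{L^1}$. Since $\rho(\omega,\cdot)$ is concentrated near $b=\omega\cdot y$, the crude bound grows like $|\omega\cdot y|^k2^{j(k+d)}$ as $\omega\cdot y\to-\infty$. Better decay of $\rho$ about its centre does not change this.

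\textbf{The repair.} Expand at the left endpoint from the start; the tail above is exactly the Taylor polynomial there. Write $F_\omega$ for the filtered profile of $\phi=\psi_j(\cdot-y)$ in the normalization of Lemma~\ref{lem:radon-variation}, so that $\rho$ is a multiple of $F_\omega^{(k+1)}$. Since $|\omega\cdot x|<c$ on $\Omega$,
\[
 F_\omega(\omega\cdot x)=\sum_{m=0}^{k}\frac{F_\omega^{(m)}(-c)}{m!}(\omega\cdot x+c)^m
 +\frac1{k!}\int_{-c}^{c}F_\omega^{(k+1)}(b)\,\sigma_k(\omega\cdot x-b)\,\dd b .
\]
The two pieces are controlled as follows.
\begin{itemize}
\item The ridge density now lives exactly on the dictionary's parameter range. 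Moreover, $\|F^{(k+1)}_\omega\|_{L^1(\mathbb{R})}$ is independent of $y$ and is $\lesssim2^{j(k+d)}$ by the $H^1$ argument of Lemmas~\ref{lem:1D-Fourier-L1} and~\ref{lem:atom-variation}, which needs only that $\widehat\psi$ is Schwartz, not that $\psi$ has compact support.
\item For the polynomial part, $|F^{(m)}_\omega(t)|\le2^{j(m+d)}\int_{\mathbb{R}}|s|^{m+d-1}|\widehat\psi(s\omega)|\,\dd s$ uniformly in $t$, $y$, $\omega$. Since polynomials of degree at most $k$ form a finite-dimensional subspace of $\mathcal L_1(\mathbb{D})$, its variation norm is $\lesssim2^{j(k+d)}$.
\end{itemize}
Alternatively, as in Lemma~\ref{lem:radon-variation}, bound the polynomial by its $L^2(\Omega)$ norm. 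That norm is at most $\|\phi\|_{L^2(\Omega)}\lesssim2^{jd}$ plus the $L^2(\Omega)$ norm of the ridge part.

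Either way only $\int_{-c}^{c}|F^{(k+1)}_\omega|$ and pointwise bounds on $F^{(m)}_\omega$ enter. The parity of $d$ and the tail of $\rho$ become irrelevant, and $j=0$ needs no separate treatment. With this repair your argument goes through.
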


Combining the embedding in Theorem~\ref{Thm:BesovEmbeddingVariation} with 
\eqref{eq:SiegelXu} gives, for every
$f\in B_{p,q}^s(\Omega)$ in the range of the theorem and every
$2\leq r<\infty$,
\[
 \inf_{g_n\in\Sigma_n(\mathbb{D})}\|f-g_n\|_{L^r(\Omega)}
 \lesssim
 n^{-1/2-(k+1/r)/d}\|f\|_{B_{p,q}^s(\Omega)}.
\]
Similarly, the entropy-based convergence rate \eqref{eq:OGA} of the
orthogonal greedy algorithm translates into
\begin{equation*}
 \|f-f_n\|_{L^2(\Omega)}\lesssim
 \varepsilon_n(\mathcal K(\mathbb{D}))_{L^2(\Omega)}\|f\|_{B^s_{p,q}(\Omega)}.
\end{equation*}
We also remark that Theorem \ref{Thm:BesovEmbeddingVariation} can be applied to the entropy-based error bound for Chebyshev greedy algorithms in Banach spaces in \cite{Li2025Chebyshev}.

Another concrete application of Theorem \ref{Thm:BesovEmbeddingVariation},
now for the ReLU dictionary with $k=1$, is the Poisson boundary value
problem on the unit square $\Omega=(0,1)^2$. Let $u\in H_0^1(\Omega)$ solve
\begin{equation*}
 -\Delta u=1\quad\text{in }\Omega,
 \qquad u=0\quad\text{on }\partial\Omega.
\end{equation*}
The solution is smooth away from the vertices while its standard
right-angle corner expansion contains a singular term of the form
$r^2\log(r)\sin(2\theta)$ in local polar coordinates centered at a
vertex. The corresponding Besov estimates for polygonal corner
singularities give
$u\in B^s_{p,p}(\Omega)$ for $0<p\leq1$ and $s<2+2/p$
\cite{DahlkePolygonal}. Therefore, Theorem~\ref{Thm:BesovEmbeddingVariation} with $k=1$ applies at its
critical endpoint and yields $u\in\mathcal L_1(\mathbb{D})$. Combining this
membership with \eqref{eq:SiegelXu}, we obtain, for $2\leq r<\infty$,
\begin{equation*}
 \inf_{g_n\in\Sigma_n(\mathbb{D})}\|u-g_n\|_{L^r(\Omega)}
 \lesssim n^{-1-1/(2r)}\|u\|_{B^{1+2/p}_{p,p}(\Omega)}.
\end{equation*}

The norm for measuring the preceding approximation errors can be replaced by any $W^{t,r}$ Sobolev norm, provided $\mathbb{D}$ is compact in $W^{t,r}(\Omega)$ and the approximation rate is properly adjusted, see Section \ref{sect:prelim} for details.

\begin{remark}
A rescaled-bump construction shows that the smoothness threshold $s$ in
Theorem~\ref{Thm:BesovEmbeddingVariation} cannot be lowered. In contrast,
Mao, Siegel, and Xu~\cite[Theorem~1]{MaoSiegelXu2024} prove
$B_{2,2}^{k+(d+1)/2}(\Omega)\hookrightarrow\mathcal L_1(\mathbb{D})$ when $\Omega$
is the unit ball. At $p=2$, the formal scaling $s=k+d/p$ gives
$s=k+d/2$, whereas their sufficient index is $k+(d+1)/2$. Thus their
condition lies one half-order above the sharp scaling line $s=k+d/2$. Theorem
\ref{Thm:BesovEmbeddingVariation} attains $s=k+d/p$ throughout the
quasi-Banach range $0<p\leq1$, including the endpoint when $q\leq1$.
Although we state the theorem only in this quasi-Banach range, the
techniques developed here are expected to extend to every $p>1$, we leave
that extension to future work.

For $1<p<2$, He and Tian~\cite[Theorem~3.3]{HeTian2026} prove
\[
 W^{k+d/p+1/p',p}(\Omega)\hookrightarrow\mathcal{RL}_k^p(\Omega)\hookrightarrow\mathcal L_1(\mathbb{D}),
 \qquad \frac1{p'}=1-\frac1p,
\]
where $\mathcal{RL}_k^p(\Omega)$ is called the Radon-domain space. Therefore, this embedding also requires an additional smoothness order $1/p'$  in view of our sharp scaling relation $s=k+d/p$. Theorem~\ref{Thm:BesovEmbeddingVariation}
attains the scaling line $s=k+d/p$ for $0<p\leq1$, with $q\leq1$ at
equality. The sharpness in \cite[Proposition~3.2]{HeTian2026} concerns the
embedding into $\mathcal{RL}_k^p(\Omega)$. Our rescaled-bump construction
establishes sharpness directly for the larger measure-valued space
$\mathcal L_1(\mathbb{D})$.
\end{remark}

Our second result identifies the optimal converse embedding.

\begin{theorem}\label{thm:variation-to-besov}
Let $\Omega\subset\mathbb{R}^d$ be a bounded Lipschitz domain and let
$0<p<1$. Then we have 
\begin{equation*}
 \mathcal L_1(\mathbb{D})\hookrightarrow B_{p,2}^{k+1}(\Omega).
\end{equation*}
For every $0<q<2$,
\begin{equation*}
 \mathcal L_1(\mathbb{D})\not\hookrightarrow B_{p,q}^{k+1}(\Omega),
\end{equation*}
and for every $s>k+1$ and $0<q\leq\infty$,
\begin{equation*}
 \mathcal L_1(\mathbb{D})\not\hookrightarrow B_{p,q}^s(\Omega).
\end{equation*}
\end{theorem}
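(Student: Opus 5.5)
The proof splits into the positive embedding and the two non-embeddings.

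\emph{Positive embedding.} Let $f=\int\sigma_k(\omega\cdot x-b)\,\dd\mu(\omega,b)$ on $\Omega$. First fix a smooth cutoff $\chi\in C_c^\infty(\mathbb{R}^d)$ with $\chi=1$ on a neighborhood of $\overline\Omega$. Set $F:=\chi\int\sigma_k(\omega\cdot{}-b)\,\dd\mu$ on $\mathbb{R}^d$. Then $F$ is an extension of $f$, so $\|f\|_{B^{k+1}_{p,2}(\Omega)}\le\|F\|_{B^{k+1}_{p,2}(\mathbb{R}^d)}$.

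It therefore suffices to bound $\|\chi\,\sigma_k(\omega\cdot{}-b)\|$ uniformly over $(\omega,b)\in\Sph^{d-1}\times[-c,c]$. A pointwise atom bound alone does not suffice, because $B^{k+1}_{p,2}$ is only a $p$-normed space and integrating atoms against $\mu$ does not preserve the bound. I will therefore work directly with the Littlewood--Paley pieces $\Delta_jF$.

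The key computation concerns $g_{\omega,b}:=\chi\,\sigma_k(\omega\cdot{}-b)$. Its $(k+1)$st derivative in direction $\omega$ is $k!\,\chi\,\delta(\omega\cdot x-b)$ plus smooth terms. Consequently $\Delta_j g_{\omega,b}$ is, up to rapidly decaying tails, concentrated in a slab of width $2^{-j}$ around the hyperplane $\omega\cdot x=b$, with amplitude $\sim 2^{-jk}$.

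Write $\Delta_jF=\int\Delta_j g_{\omega,b}\,\dd\mu$. I will estimate $\|(\sum_j 2^{2j(k+1)}|\Delta_jF|^2)^{1/2}\|_{L^p}$ and use the embedding $F^{k+1}_{p,2}\hookrightarrow B^{k+1}_{p,2}$, valid since $p\le2$. By Minkowski in $\ell^2$,
\[
\Big(\sum_j 2^{2j(k+1)}|\Delta_jF(x)|^2\Big)^{1/2}\le\int G_{\omega,b}(x)\,\dd|\mu|,
\qquad G_{\omega,b}:=\Big(\sum_j2^{2j(k+1)}|\Delta_jg_{\omega,b}|^2\Big)^{1/2}.
\]
From the slab localization, $G_{\omega,b}(x)\lesssim \mathrm{dist}(x,H_{\omega,b})^{-1}$ on a fixed ball, with rapid decay outside it. This is a vector-valued singular-integral-type bound.

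The function $\int \mathrm{dist}(x,H_{\omega,b})^{-1}\,\dd|\mu|$ is not integrable, but it lies in weak-$L^1$ on the ball, uniformly, with norm $\lesssim\|\mu\|_{\mathrm{TV}}$. To see this, decompose into layers: the measure of $\{x\in B:\int \mathrm{dist}^{-1}\,\dd|\mu|>\lambda\}$ should be controlled by a weak-type $(1,1)$ argument, viewing the map $\mu\mapsto\int \mathrm{dist}^{-1}\,\dd|\mu|$ as a Radon-transform adjoint. Since $B$ is bounded and $p<1$, weak-$L^1$ embeds into $L^p$, which gives the claim.

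I expect the weak-$L^1$ estimate for a superposition of hyperplane singularities to be the main obstacle. A plain sum of weak-$L^1$ functions is not weak-$L^1$ with linear constant. What I would try first is to refine the argument to avoid the triangle inequality: use Stein's vector-valued Calder\'on--Zygmund theorem on the $\ell^2$-valued operator $\{2^{j(k+1)}\Delta_j\}$ acting on the measure $\partial_\omega^{k+1}$-type derivative. This views $F$'s $(k+1)$st derivatives as a measure-valued function with total variation $\lesssim\|\mu\|_{\mathrm{TV}}$, after localization by $\chi$ and Fourier--Radon representation. The weak $(1,1)$ bound for the square function applied to a finite measure then gives the result directly. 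Averaging over $\omega$ requires care, so I would decompose $\mu$ over finitely many caps of directions.

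\emph{Non-embedding for $q<2$.} Take $\mu=\sum_j a_j\,\delta_{(e_1,b_j)}$ with distinct $b_j$ spread apart, or more simply a single atom $\sigma_k(x_1)$. For a single atom, $\|\Delta_j\sigma_k(x_1)\chi\|_{L^p}\sim 2^{-j(k+1/p)}$, which is summable even for $q<2$. So a single atom is not enough, and I will use a superposition that saturates the square function. Take $F=\int_0^{1/2}\sigma_k(x_1-b)\,h(b)\,\dd b$ with $h\in L^1$ chosen (for instance $h(b)=b^{-1}|\log b|^{-\gamma}$) so that $\|\Delta_jF\|_{L^p}\sim 2^{-j(k+1)}\epsilon_j$ with $\epsilon_j\in\ell^2\setminus\ell^q$. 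Here only the weak-$L^1$-type behavior near $x_1=0$ contributes, and the $L^p$ quasi-norm at scale $2^{-j}$ picks up $\int_{2^{-j}}^{2^{-j+1}}h$.

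\emph{Non-embedding for $s>k+1$.} A single atom $\sigma_k(\omega\cdot x)$ whose hyperplane crosses $\Omega$ has $\|\Delta_j\|_{L^p}\gtrsim2^{-j(k+1/p)}$ restricted to $\Omega$. Hence it belongs to $B^s_{p,q}(\Omega)$ only if $s<k+1/p$ (or $s=k+1/p$ with $q=\infty$). That bound is larger than $k+1$, so the single atom does not rule out $s>k+1$. Instead I will use a dense superposition $F=\int_0^{1/2}\sigma_k(x_1-b)\,\dd b$, which is $C^{k+1}$ with a $(k+1)$-fold jump, giving $\|\Delta_jF\|_{L^p}\sim2^{-j(k+1+1/p)}$. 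This is again too smooth. The right choice is a sum $F=\sum_m \sigma_k(x_1-b_m)$ with $b_m$ dense at scale $2^{-j}$ and weights $2^{-j}$, fitted across all $j$ so that the slabs fill $\Omega$. Then $\|\Delta_jF\|_{L^p}\gtrsim 2^{-j(k+1)}$, so $F\notin B^s_{p,\infty}$ for any $s>k+1$. Equivalently, by the first construction, $F\notin B^{k+1}_{p,q}$ for $q<2$, which forces failure for all larger $s$ by monotonicity of the Besov scale.
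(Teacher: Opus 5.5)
\textbf{Gap in the non-embedding for $q<2$.} The density $h(b)=b^{-1}|\log b|^{-\gamma}$ cannot work for $p<1$. Since $\partial_1^{k+1}F=k!\,h(x_1)\mathbf 1_{(0,1/2)}(x_1)$, the block $2^{j(k+1)}\Delta_jF$ is essentially the frequency-$2^j$ piece of $h$. The part of $h$ on $[0,2^{1-j}]$ has mass at most $\|h\|_{L^1}$. At frequency $2^j$ it acts like a single ridge of that mass, so by your own single-atom computation it contributes $\lesssim2^{-j(1/p-1)}$ to $2^{j(k+1)}\|\Delta_jF\|_{L^p}$. The dyadic pieces of $h$ at $b\simeq2^{-i}$ with $i<j$ are smooth at scale $2^{-i}$. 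The vanishing moments of $\varphi^\vee$ therefore damp them by $2^{-(j-i)M}$, and summing them with $p$-subadditivity again gives $\lesssim2^{-j(1/p-1)}$. Hence $2^{j(k+1)}\|\Delta_jF\|_{L^p}$ decays geometrically, and $F\in B^{k+1}_{p,q}(\Omega)$ for every $q$.

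Your heuristic that the quasi-norm at scale $2^{-j}$ ``picks up $\int_{2^{-j}}^{2^{1-j}}h$'' is an $L^1$ computation. In $L^p$ with $p<1$, a mass $m$ placed on a slab of width $\delta$ has quasi-norm about $m\delta^{1/p-1}$, so concentrating mass near one hyperplane is penalized, not rewarded. Even the $L^1$ heuristic would fail. The shell masses of a nonnegative density sum to at most $\|h\|_{L^1}$ (here $h\in L^1$ forces $\gamma>1$). They therefore always lie in $\ell^1$, never in $\ell^2\setminus\ell^q$ for $q\ge1$.

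\textbf{What is missing.} You need a representing density that, at each of $N$ scales, is spread over a fixed interval with amplitude $N^{-1/2}$, while its total variation stays bounded through cancellation between the scales. The paper takes $g_N(b)=N^{-1/2}\theta(b)\sum_{\ell=1}^N\cos(\lambda_\ell b)$ with lacunary frequencies $\lambda_\ell=\tau_02^{J_*+L\ell}$. Near-orthogonality gives $\|g_N\|_{L^1}\lesssim\|g_N\|_{L^2}\lesssim1$. Hence $H_N=\int\sigma_k(t-b)g_N(b)\,\dd b$ satisfies $\|H_N\|_{\mathcal L_1(\mathbb D)}\lesssim1$. At the resonant scale $J_\ell$, the block $2^{J_\ell(k+1)}\Delta_{J_\ell}(\rho H_N)$ is, up to negligible errors, $N^{-1/2}$ times a fixed nonvanishing oscillation modulated by $\theta$. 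Its $L^p$ quasi-norm is therefore $\gtrsim N^{-1/2}$ for every $\ell$, which gives $\|H_N\|_{B^{k+1}_{p,q}(\Omega)}\gtrsim N^{1/q-1/2}\to\infty$.

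\textbf{The other parts.} These follow essentially the paper's route.
\begin{itemize}
\item \emph{Positive embedding.} Your refined argument is what the paper does: an $\ell^2$-valued Calder\'on--Zygmund square function on the measure-valued derivatives of order $k+1$, then weak $L^1$ to $L^p$ on a bounded set, then $F^{k+1}_{p,2}\hookrightarrow B^{k+1}_{p,2}$. The direction caps are unnecessary. A single directional derivative per cap would force division by $(e\cdot\xi)^{k+1}$, which vanishes on a hyperplane through the annulus. Instead, use that every $D^\alpha F$ with $|\alpha|=k+1$ is a finite measure of mass $\lesssim\|\mu\|_{\mathrm{TV}}$, together with $\varphi(\xi)=\sum_{|\alpha|=k+1}\rho_\alpha(\xi)(\sqrt{-1}\xi)^\alpha$ for smooth annular $\rho_\alpha$.
\item \emph{The case $s>k+1$.} Your separated-ridge idea is the paper's, but it works only as a family $V_N=N^{-1}\sum_{i=1}^N\sigma_k(x_1-t_{i,N})$ with spacing a large multiple of $2^{-j_N}$. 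With weight $2^{-j}$ on each of $\simeq2^j$ ridges at every scale, a single $F$ ``fitted across all $j$'' has infinite total variation. Moreover, any $F\in\mathcal L_1(\mathbb D)$ with $2^{j(k+1)}\|\Delta_j(\rho F)\|_{L^p}\gtrsim1$ for all $j$ would contradict the positive embedding.
\item \emph{Your closing reduction.} It is valid, since $B^s_{p,q'}\hookrightarrow B^{k+1}_{p,q}$ for $s>k+1$ and all $q,q'$. Once repaired, the lacunary family alone would also give the last assertion.
\end{itemize}
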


The forward proof combines a spatial atomic decomposition of
Littlewood--Paley blocks with a Fourier--Radon representation and a
one-dimensional Peano formula. The spatial coefficient sum produces the critical factor
$2^{jd/p}$, while the scale sum yields the distinction between the
critical and above-critical cases. The converse proof uses measure-valued
derivatives and a vector-valued Calder\'on--Zygmund estimate. Separated
and lacunary ridge constructions prove the sharpness assertions in the
second theorem.

The rest of the paper is organized as follows. Section~2 introduces the
variation and Besov spaces. Section~3 develops the localized ridge
representation used in the forward embedding. Section~4 proves
Theorem~\ref{Thm:BesovEmbeddingVariation} and the sharpness of its
smoothness threshold. Section~5 proves Theorem~\ref{thm:variation-to-besov}
and its sharpness assertions. Section~\ref{sec:conclusion} concludes the
paper and discusses several directions for further research.

Throughout, $A\lesssim B$ means $A\leq CB$ for a constant independent of
the functions under consideration, and $A\simeq B$ means both
$A\lesssim B$ and $B\lesssim A$.

\section{Preliminaries}\label{sect:prelim}
We first recall the atomic definition and measure representation of the
variation space. We then specify the Littlewood--Paley resolution used to
describe Besov regularity. These preliminaries also identify the critical
smoothness index that will govern the forward embedding.

Throughout this paper, $\Omega\subset\mathbb{R}^d$ denotes a bounded Lipschitz domain and $k\in\N_+$. All function spaces are taken over the
real field. We write
\[
\sigmaK(t):=\max\{t,0\}^k
\]
and choose $c>0$ such that
\[
-c<\inf_{\substack{x\in\Omega\\ \omega\in\Sph^{d-1}}}\omega\cdot x
<\sup_{\substack{x\in\Omega\\ \omega\in\Sph^{d-1}}}\omega\cdot x<c.
\]
The normalized $\operatorname{ReLU}^k$ dictionary is
\begin{equation*}
\mathbb{D}:=\left\{\sigmaK(\omega\cdot x-b):\omega\in\Sph^{d-1},\ b\in[-c,c]\right\}.
\end{equation*}
The normalization $|\omega|=1$ removes the rescaling ambiguity in the
outer $\ell^1$ coefficient budget. 

\begin{definition}
Let $X$ be a Banach space of some class of functions on $\Omega$, continuously embedded into the space $\mathcal{D}'(\Omega)$ of distributions.  Let $\mathbb{D}\subset X$ be compact. The
closed absolutely convex hull of $\mathbb{D}$ is
\[
\mathcal K(\mathbb{D})=\mathcal K_X(\mathbb{D}):=
\overline{\left\{\sum_{i=1}^N a_i v_i:
v_i\in\mathbb{D},\ \sum_{i=1}^N|a_i|\leq1\right\}}^{\,X}.
\]
The variation space is defined as
\begin{align*}
\mathcal{L}_1(\mathbb{D})=\mathcal{L}_1^X(\mathbb{D})
&:=\{f\in X:\|f\|_{\mathcal{L}_1(\mathbb{D})}<\infty\}
\end{align*}
with the corresponding norm
\begin{align*}
    \|f\|_{\mathcal{L}_1(\mathbb{D})}
&:=\inf\{t>0:f\in t\mathcal K(\mathbb{D})\}.
\end{align*}
\end{definition}

In the terminology of greedy approximation, $\mathcal K(\mathbb{D})$ is the
atomic convex-hull class usually denoted by $A_1(\mathbb{D})$. Thus
$\mathcal L_1(\mathbb{D})$ is exactly the atomic space whose norm controls the
Hilbert-space greedy approximation rate
\cite{Jones1992,DeVoreTemlyakov1996,BarronCohenDahmenDeVore2008}. We use the standard measure representation of the variation norm from
\cite[Section~2]{SiegelXuVariation}.

\begin{lemma}\label{lem:VariationNormEquivalence}
Let $X$ be a Banach space and suppose that $\mathbb{D}\subset X$ is
compact. Then
$f\in\mathcal L_1(\mathbb{D})$ if and only if there exists a finite
signed Borel measure $\mu$ on $\mathbb{D}$ such that
\[
f=\int_{\mathbb{D}}v\,\dd\mu(v),
\]
and in this case, the variation norm is given by
\begin{equation*}
\|f\|_{\mathcal L_1(\mathbb{D})}
=\inf\left\{\|\mu\|_{\mathrm{TV}}:
f=\int_{\mathbb{D}}v\,\dd\mu(v)\right\}.
\end{equation*}
\end{lemma}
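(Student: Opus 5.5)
We prove Lemma~\ref{lem:VariationNormEquivalence} by a two-sided comparison between the gauge of $\mathcal K(\mathbb{D})$ and the quantity
\[
N(f):=\inf\Big\{\|\mu\|_{\mathrm{TV}}: f=\int_{\mathbb{D}}v\,\dd\mu(v)\Big\}.
\]
Here the integral is a Bochner integral in $X$. It is well defined because $\mathbb{D}$ is compact, hence separable, and $v\mapsto v$ is continuous and bounded on $\mathbb{D}$. The plan is to show that $f\in t\mathcal K(\mathbb{D})$ for some $t$ if and only if $N(f)<\infty$, and that the gauge equals $N(f)$.

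\emph{Step 1: $\|f\|_{\mathcal L_1(\mathbb{D})}\le N(f)$.} Let $\mu$ represent $f$ with $\|\mu\|_{\mathrm{TV}}=t>0$. We show $f\in t\mathcal K(\mathbb{D})$. Suppose not. Since $t\mathcal K(\mathbb{D})$ is closed, convex and balanced, Hahn--Banach gives $\ell\in X^*$ with
\[
\ell(f)>\sup_{g\in t\mathcal K(\mathbb{D})}\ell(g)=t\sup_{v\in\mathbb{D}}|\ell(v)|.
\]
On the other hand, $\ell$ commutes with the Bochner integral, so
\[
\ell(f)=\int_{\mathbb{D}}\ell(v)\,\dd\mu(v)\le t\sup_{v\in\mathbb{D}}|\ell(v)|,
\]
a contradiction. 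Taking the infimum over $\mu$ proves the inequality. (An alternative is to approximate $\mu$ weakly by discrete measures $\sum a_i\delta_{v_i}$ with $\sum|a_i|\le t$, using uniform continuity of the identity on the compact set $\mathbb{D}$. The separation argument is cleaner.)

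\emph{Step 2: $N(f)\le\|f\|_{\mathcal L_1(\mathbb{D})}$.} This is the main obstacle, because it requires a compactness argument on the measures. Let $f\in t\mathcal K(\mathbb{D})$. Then there are finite combinations
\[
f_n=\sum_i a_i^{(n)}v_i^{(n)},\qquad \sum_i|a_i^{(n)}|\le t,\qquad f_n\to f \text{ in } X.
\]
Set $\mu_n=\sum_i a^{(n)}_i\delta_{v^{(n)}_i}$, so that $\|\mu_n\|_{\mathrm{TV}}\le t$. Since $\mathbb{D}$ is a compact metric space, $M(\mathbb{D})=C(\mathbb{D})^*$ with separable predual $C(\mathbb{D})$. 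Banach--Alaoglu then yields a subsequence $\mu_n\to\mu$ weak-$*$ with $\|\mu\|_{\mathrm{TV}}\le t$.

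To identify the limit, fix $\ell\in X^*$. Its restriction $\ell|_{\mathbb{D}}$ lies in $C(\mathbb{D})$, so
\[
\ell(f_n)=\int_{\mathbb{D}}\ell(v)\,\dd\mu_n(v)\longrightarrow\int_{\mathbb{D}}\ell(v)\,\dd\mu(v)=\ell\Big(\int_{\mathbb{D}}v\,\dd\mu(v)\Big).
\]
Since also $\ell(f_n)\to\ell(f)$ and $X^*$ separates points, $f=\int_{\mathbb{D}} v\,\dd\mu(v)$. Hence $N(f)\le t$, and taking the infimum over admissible $t$ gives $N(f)\le\|f\|_{\mathcal L_1(\mathbb{D})}$.

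Combining the two steps gives both the membership equivalence and the norm identity. Moreover, Step~2 shows that the infimum defining $N(f)$ is attained.
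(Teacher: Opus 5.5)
Your proof is correct and follows essentially the route behind the cited result, which is also the weak-$*$ compactness argument the paper sketches when it re-derives the parameterized representation in the proof of Theorem~\ref{thm:variation-to-besov}: Hahn--Banach separation gives $\|f\|_{\mathcal L_1(\mathbb{D})}\le N(f)$, and Banach--Alaoglu on $M(\mathbb{D})=C(\mathbb{D})^*$, with the limit identified by testing against functionals in $X^*$, gives the converse. Your closing attainment remark also needs $t=\|f\|_{\mathcal L_1(\mathbb{D})}$ itself to be admissible; this holds because $\mathcal K(\mathbb{D})$ is closed and bounded, so $f\in\|f\|_{\mathcal L_1(\mathbb{D})}\mathcal K(\mathbb{D})$.
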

In Lemma \ref{lem:VariationNormEquivalence}, $f$ is viewed as a distribution in $\mathcal{D}^\prime(\Omega)$, and $f=\int_{\mathbb{D}}v\,\dd\mu(v)$ is a Bochner integral, meaning $\langle f,\varphi\rangle=\int_{\mathbb{D}}\langle v,\varphi\rangle\,\dd\mu(v)$ for all $\varphi\in C_c^\infty(\Omega)$.

\subsection{Choice of the ambient space}\label{subsec:ambient-space}
The ambient $X$-norm specifies the topology used to close the absolutely convex hull, while the variation norm is the infimum of the total
variation of representing signed measures. The following result shows that $\mathcal{L}_1^X(\mathbb{D})$ is largely independent of the ambient space $X$.
\begin{proposition}\label{prop:L1D_invariance}
Let $X$ and $Y$ be Banach spaces of functions on $\Omega$, both continuously embedded into $\mathcal{D}'(\Omega)$. Suppose $\mathbb{D}\subset X\cap Y$ is compact in  $X$ and $Y$. Then 
\begin{equation*}
\mathcal L_1^X(\mathbb{D})=\mathcal L_1^Y(\mathbb{D})\quad\text{with identical
variation norms}.
\end{equation*}
Consequently, $\mathcal K_X(\mathbb{D})=\mathcal K_Y(\mathbb{D})$, and
\begin{equation*}
 \|f\|_X\leq \sup_{v\in\mathbb{D}}\|v\|_X\|f\|_{\mathcal L_1(\mathbb{D})}.
\end{equation*}
\end{proposition}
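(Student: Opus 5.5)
The plan is to use Lemma~\ref{lem:VariationNormEquivalence} to reduce both variation norms to the same infimum over representing measures. The point is that a Bochner representation $f=\int_{\mathbb{D}}v\,\dd\mu(v)$ is, by the remark after that lemma, a statement about distributions:
\[
\langle f,\varphi\rangle=\int_{\mathbb{D}}\langle v,\varphi\rangle\,\dd\mu(v)\qquad\text{for all }\varphi\in C_c^\infty(\Omega).
\]
This identity does not refer to $X$ or $Y$. We should check, however, that the integral is genuinely well defined in both spaces.

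\textbf{Step 1: the Borel structure does not depend on the ambient space.} Both $X$ and $Y$ embed continuously into $\mathcal D'(\Omega)$, so the identity maps $(\mathbb{D},\|\cdot\|_X)\to\mathcal D'(\Omega)$ and $(\mathbb{D},\|\cdot\|_Y)\to\mathcal D'(\Omega)$ are continuous injections. Their domains are compact and their targets are Hausdorff, so each is a homeomorphism onto its image. Hence the $X$- and $Y$-topologies on $\mathbb{D}$ coincide with the weak-$*$ distributional topology on $\mathbb{D}$. In particular both induce the same Borel $\sigma$-algebra, so "finite signed Borel measure on $\mathbb{D}$" means the same thing in both settings.

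\textbf{Step 2: the integral is the same element.} Fix such a measure $\mu$. The map $v\mapsto v$ is continuous and bounded from $\mathbb{D}$ into $X$, by compactness. It is therefore Bochner integrable, and $g_X:=\int v\,\dd\mu\in X$. Likewise $g_Y:=\int v\,\dd\mu\in Y$. Continuous linear maps commute with Bochner integrals, so for every test function $\varphi$,
\[
\langle g_X,\varphi\rangle=\int\langle v,\varphi\rangle\,\dd\mu=\langle g_Y,\varphi\rangle .
\]
Hence $g_X=g_Y$ in $\mathcal D'(\Omega)$.

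\textbf{Step 3: conclude.} By Steps~1 and~2, the set of pairs $(f,\mu)$ with $f=\int v\,\dd\mu$ is the same for $X$ and for $Y$. By Lemma~\ref{lem:VariationNormEquivalence}, $\mathcal L_1^X(\mathbb{D})=\mathcal L_1^Y(\mathbb{D})$, and the norms agree as the same infimum of $\|\mu\|_{\mathrm{TV}}$.

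For the unit balls, $\mathcal K_X(\mathbb{D})\subseteq\{\|f\|_{\mathcal L_1}\le1\}$ directly from the definition. Conversely, if $\|f\|_{\mathcal L_1}\le 1$, then $f\in t\mathcal K_X$ for all $t>1$, and closedness of $\mathcal K_X$ gives $f\in\mathcal K_X$. So $\mathcal K_X=\{\|f\|_{\mathcal L_1^X}\le1\}$, and the same holds for $Y$. Equality of the norms then gives $\mathcal K_X(\mathbb{D})=\mathcal K_Y(\mathbb{D})$.

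Finally, the Bochner integral satisfies
\[
\|f\|_X\le\int\|v\|_X\,\dd|\mu|\le\sup_{\mathbb{D}}\|v\|_X\,\|\mu\|_{\mathrm{TV}} .
\]
Taking the infimum over $\mu$ yields the stated bound.

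The main obstacle is the subtle point in Step~1: a priori the representing measures live on differently topologized copies of $\mathbb{D}$. The compactness-plus-Hausdorff argument resolves this. A secondary concern is that Lemma~\ref{lem:VariationNormEquivalence} presupposes that the infimum characterization is exact, including the closure in the definition of $\mathcal K$. The unit-ball argument above handles this.
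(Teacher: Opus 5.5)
Your proof is correct and follows essentially the same route as the paper. You identify the $X$- and $Y$-topologies on $\mathbb{D}$ through compactness and the embeddings into $\mathcal D'(\Omega)$, show that the two Bochner integrals coincide as distributions by testing against $\varphi\in C_c^\infty(\Omega)$, invoke Lemma~\ref{lem:VariationNormEquivalence}, and finish with the unit-ball identification and the Bochner norm bound, while making explicit two points the paper leaves implicit: the compact-to-Hausdorff homeomorphism and the closedness argument showing that $\mathcal K_Z(\mathbb{D})$ is the closed unit ball of the gauge.
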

\begin{proof}
By compactness and the continuous embeddings into $\mathcal D'(\Omega)$, the $X$- and $Y$-topologies coincide on $\mathbb{D}$. Thus both realizations
of $\mathbb{D}$ have the same finite signed Borel measures. Let $\mu$ be any such measure and define
\[
 T_X\mu:=\int_{\mathbb{D}}v\,\dd\mu(v)\quad\text{in }X,
 \qquad
 T_Y\mu:=\int_{\mathbb{D}}v\,\dd\mu(v)\quad\text{in }Y.
\]
For every $\varphi\in C_c^\infty(\Omega)$, the distributional pairing
is a continuous linear functional on both spaces, so
\[
 \langle T_X\mu,\varphi\rangle
 =\int_{\mathbb{D}}\langle v,\varphi\rangle\,\dd\mu(v)
 =\langle T_Y\mu,\varphi\rangle.
\]
Consequently, $T_X\mu=T_Y\mu$ in $\mathcal D'(\Omega)$. Lemma~\ref{lem:VariationNormEquivalence} yields
$\mathcal L_1^X(\mathbb{D})=\mathcal L_1^Y(\mathbb{D})$ and, for every
function in this common space,
\[
\begin{aligned}
 \|f\|_{\mathcal L_1^X(\mathbb{D})}
 &=\inf_{\mu:\,T_X\mu=f}\|\mu\|_{\mathrm{TV}}\\
 &=\inf_{\mu:\,T_Y\mu=f}\|\mu\|_{\mathrm{TV}}
 =\|f\|_{\mathcal L_1^Y(\mathbb{D})}.
\end{aligned}
\]

For either ambient space $Z\in\{X,Y\}$,
\[
 \mathcal K_Z(\mathbb{D})
 =\{f\in\mathcal L_1^Z(\mathbb{D}):\|f\|_{\mathcal L_1^Z(\mathbb{D})}\leq1\}.
\]
Equality of the variation spaces
and norms therefore implies
\[
 \mathcal K_X(\mathbb{D})=\mathcal K_Y(\mathbb{D}).
\]
Finally, using the basic inequality
\[
 \left\|\int_{\mathbb{D}}v\,\dd\mu(v)\right\|_X
 \leq \sup_{v\in\mathbb{D}}\|v\|_X\|\mu\|_{\mathrm{TV}},
\]
and then taking the infimum over representing measures proves the embedding.
\end{proof}

We use $L^2(\Omega)$ as the
reference ambient space and fix
$\mathcal L_1(\mathbb{D})=\mathcal L_1^{L^2(\Omega)}(\mathbb{D})$.
We write $W^{s,p}(\Omega)$ for the Sobolev space of order $s\geq0$,
with $W^{0,p}(\Omega)=L^p(\Omega)$. The indices $s,p$ in the following
corollary describe the ambient space and are independent of the Besov
indices in the main embedding theorems.
\begin{corollary}\label{cor:ambient-sobolev}
For $1\leq p\leq\infty$ and $0\leq s<k+1/p$,
let $X=W^{s,p}(\Omega)$.
Then $\mathbb{D}$ is compact in $X$ and, for all such $X$,
\begin{equation*}
\mathcal L_1^X(\mathbb{D})=\mathcal L_1(\mathbb{D})\quad\text{with identical
variation norms}.
\end{equation*}
Consequently, $\mathcal K_X(\mathbb{D})=\mathcal K_{L^2(\Omega)}(\mathbb{D})$, and
\begin{equation*}
 \|f\|_X\leq \sup_{v\in\mathbb{D}}\|v\|_X\|f\|_{\mathcal L_1(\mathbb{D})}.
\end{equation*}
\end{corollary}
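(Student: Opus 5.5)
The plan is to reduce everything to Proposition~\ref{prop:L1D_invariance} with $Y=L^2(\Omega)$. The only things to check are that $\mathbb{D}\subset W^{s,p}(\Omega)\cap L^2(\Omega)$, that $\mathbb{D}$ is compact in both spaces, and that both spaces embed continuously into $\mathcal D'(\Omega)$. The last point is immediate for bounded $\Omega$ and $1\le p\le\infty$. Once these hold, the proposition gives equality of the variation spaces and norms, equality of the hulls $\mathcal K_X(\mathbb{D})=\mathcal K_{L^2(\Omega)}(\mathbb{D})$, and the norm bound. Compactness in $L^2(\Omega)$ is the special case $s=0$, $p=2$, so it suffices to prove compactness in $X=W^{s,p}(\Omega)$ for the stated range.

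Compactness follows once the parametrization
\[
\Phi:\Sph^{d-1}\times[-c,c]\to X,\qquad \Phi(\omega,b)=\sigmaK(\omega\cdot{}-b)|_\Omega,
\]
is continuous, since $\mathbb{D}=\Phi(\Sph^{d-1}\times[-c,c])$ is then the continuous image of a compact set. I would first work on a fixed ball $B\supset\Omega$ and then restrict; restriction $W^{s,p}(B)\to W^{s,p}(\Omega)$ is bounded for any reasonable definition on a Lipschitz domain. Write $\sigmaK(\omega\cdot x-b)=g_b(\omega\cdot x)$ with $g_b=\sigmaK(\cdot-b)$. Rotating coordinates reduces matters to one-dimensional behaviour in the direction $\omega$. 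Continuity in $b$ for fixed $\omega$ comes down to continuity of translation $b\mapsto g_b$ in the one-dimensional space $W^{s,p}(I)$ on a bounded interval $I$. Continuity in $\omega$ can be handled by composing with rotations $R$ close to the identity, using that $f\mapsto f\circ R$ is strongly continuous on $W^{s,p}$ of a larger ball when $p<\infty$.

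The main obstacle is the one-dimensional fact that $g=\sigmaK$ lies in $W^{s,p}_{\mathrm{loc}}(\mathbb{R})$ for $s<k+1/p$, together with continuity of translation there. For integer $m\le k$, $g^{(m)}=c_m\sigma_{k-m}$ is bounded and continuous (a Heaviside jump when $m=k$), which is fine in $L^p$. The fractional part needs care: write $s=k+\theta$ with $0<\theta<1/p$, or $s=m+\theta$ with $m<k$. The Heaviside function $H$ lies in $W^{\theta,p}_{\mathrm{loc}}$ exactly when $\theta p<1$, since the Gagliardo seminorm on $I$ is
\[
\int_{I}\int_{I}\frac{|H(x)-H(y)|^p}{|x-y|^{1+\theta p}}\dd x\dd y\simeq\int_0^{1}\int_0^{1}(x+y)^{-1-\theta p}\dd x\dd y<\infty .
\]
Lower derivatives are Lipschitz and cause no trouble. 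The same computation lifts to $d$ dimensions, because the jump set is a hyperplane and the seminorm factorizes through $\omega\cdot x$.

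For $p<\infty$, translation continuity follows from density of smooth functions in $W^{s,p}$. For $p=\infty$ the constraint is $s<k$. Then $\sigmaK$ is $C^{k-1,1}$, and for $s<k$ the map $b\mapsto g_b$ is continuous into $C^{s}$, even uniformly, by direct estimation of the $C^{k-1}$ and Hölder moduli; this should be handled separately. Rotational continuity in the case $p=\infty$ follows by the same uniform estimate, because the parametrization is jointly smooth in $(\omega,b)$ away from the hyperplane and the Hölder norms of $\sigma_{k-m}$, $m<k$, vary continuously. With continuity of $\Phi$ established, compactness holds and the corollary follows from Proposition~\ref{prop:L1D_invariance}.
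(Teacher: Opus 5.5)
Your proposal is correct. Its skeleton matches the paper's: continuity of the parameter map into $W^{s,p}$, hence compactness, then Proposition~\ref{prop:L1D_invariance} with $Y=L^2(\Omega)$. For $p<\infty$, however, you obtain that continuity by a different mechanism.

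\textbf{The paper's route.} It never uses group structure. It multiplies by a cutoff $\chi$ and gets continuity into $W^{k,p}$ by dominated convergence: the derivatives of order at most $k$ are uniformly bounded, share a common support, and converge off the limiting hyperplane. For the fractional part it writes the Gagliardo seminorm as $\int\|\tau_h v-v\|_{L^p}^p|h|^{-d-\theta p}\dd h$. The parameter-uniform strip bound $\|\tau_hD^\alpha F_{\omega,b}-D^\alpha F_{\omega,b}\|_{L^p}^p\lesssim\min\{|h|,1\}$ then serves as the dominating function. Joint continuity in $(\omega,b)$ comes out in one step, with an explicit modulus.

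\textbf{Your route.} You prove only membership, via the one-dimensional Heaviside computation and the factorization of the seminorm for ridge functions. Continuity then comes for free from strong continuity of translations and rotations, via density of smooth functions.

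\textbf{Comparison.} The analytic input is the same in both: a hyperplane jump satisfies $\|\tau_h H-H\|_{L^p}^p\lesssim|h|$, i.e., it lies in $W^{\theta,p}$ exactly for $\theta<1/p$. Your argument is more modular and shows that no parameter-uniform estimate is needed. The price is reliance on density in fractional Sobolev spaces of a ball and on a rotation-invariant setup.

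\textbf{A point to make explicit.} Separate continuity in $b$ and in $\omega$ does not by itself give joint continuity. It does here because $f\mapsto f\circ R$ is an isometry of $W^{s,p}(B)$ for a centered ball $B$, since the Gagliardo norm is rotation invariant. Take $\Phi$ on $B$ before restriction, and let $R$ be the rotation in the plane of $\omega,\omega'$ with $R\omega=\omega'$, so that $R\to I$ as $\omega'\to\omega$. Then $\Phi(\omega',b')=\Phi(\omega,b')\circ R^{-1}$ and
\[
\|\Phi(\omega',b')-\Phi(\omega,b)\|_{W^{s,p}(B)}\le\|\Phi(\omega,b')-\Phi(\omega,b)\|_{W^{s,p}(B)}+\|\Phi(\omega,b)\circ R^{-1}-\Phi(\omega,b)\|_{W^{s,p}(B)}.
\]

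\textbf{The case $p=\infty$.} Here your argument coincides with the paper's: sup-norm continuity of the derivatives of order at most $k-1$, plus a uniform Lipschitz bound, interpolated to the H\"older seminorm.
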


\begin{proof}
Set $\Theta:=\Sph^{d-1}\times[-c,c]$ and
$\Phi_{\omega,b}(x):=\sigma_k(\omega\cdot x-b)$. Since $\Omega$ is bounded and $k\geq1$,
the parameter map $(\omega,b)\mapsto\Phi_{\omega,b}$
is continuous into $L^p(\Omega)$ for every $1\leq p\leq\infty$, proving compactness of $\mathbb{D}$ in these spaces.

Choose a cut-off function $\chi\in C_c^\infty(\mathbb{R}^d)$ equal to one near
$\overline\Omega$ and set $F_{\omega,b}:=\chi\Phi_{\omega,b}$.
For $1\leq p<\infty$, the weak derivatives of $F_{\omega,b}$ through
order $k$ are uniformly bounded, have a common compact support, and
converge pointwise away from the limiting hyperplane as the parameters
converge. Dominated convergence gives continuity into
$W^{k,p}(\mathbb{R}^d)$.
For $|\alpha|=k$, the only discontinuous term in $D^\alpha F_{\omega,b}$
is $k!\chi\omega^\alpha\mathbf1_{\{\omega\cdot x>b\}}$.
Its translates differ across a strip of volume $O(|h|)$ on the fixed
support, while the remaining terms are uniformly Lipschitz. Thus,
writing $\tau_hv(x):=v(x+h)$, we have uniformly in $(\omega,b)$
\[
 \|\tau_hD^\alpha F_{\omega,b}-D^\alpha F_{\omega,b}\|_{L^p(\mathbb{R}^d)}^p
 \lesssim\min\{|h|,1\}.
\]
For convergent parameters $(\omega_n,b_n)\to(\omega,b)$, set
$v_n:=D^\alpha(F_{\omega_n,b_n}-F_{\omega,b})$.
Since $v_n\to0$ in $L^p$, for $0<\theta<1/p$ dominated convergence yields
\[
 [v_n]_{W^{\theta,p}(\mathbb{R}^d)}^p
 =\int_{\mathbb{R}^d}
 \frac{\|\tau_hv_n-v_n\|_{L^p(\mathbb{R}^d)}^p}{|h|^{d+\theta p}}\,\dd h
 \longrightarrow0.
\]
The dominating function is a constant times
$\min\{|h|,1\}|h|^{-d-\theta p}$, which is integrable.
This proves continuity into $W^{k+\theta,p}(\mathbb{R}^d)$; lower orders
follow by Sobolev inclusions on $\mathbb{R}^d$.

For $p=\infty$, the derivatives $D^\alpha F_{\omega,b}$ with
$|\alpha|\leq k-1$ are uniformly Lipschitz and depend continuously on
the parameters in $L^\infty(\mathbb{R}^d)$. For $0<\theta<1$, the estimate
\[
 [g]_{W^{\theta,\infty}(\mathbb{R}^d)}
 \lesssim \|g\|_{L^\infty(\mathbb{R}^d)}^{1-\theta}
 \|\nabla g\|_{L^\infty(\mathbb{R}^d)}^\theta,
\]
applied to differences of these derivatives, gives continuity into
$W^{s,\infty}(\mathbb{R}^d)$ for every $0\leq s<k$.
Restriction to $\Omega$ and compactness of $\Theta$ therefore show that
$\mathbb{D}$ is compact in $W^{s,p}(\Omega)$ in the stated range. 

Combining the proved compactness with Proposition \ref{prop:L1D_invariance} completes the proof.
\end{proof}

The variation space $\mathcal{L}_1(\mathbb{D})$ is a Banach space
\cite[Lemma~1]{SiegelXuVariation}. In
Lemma~\ref{lem:radon-variation}, the $L^2$ case of
Corollary~\ref{cor:ambient-sobolev} controls the norm of the ridge
remainder, allowing us to estimate the polynomial part. In the proof of
Theorem~\ref{Thm:BesovEmbeddingVariation}, completeness turns summability
of the variation norms of the Littlewood--Paley blocks into convergence
in $\mathcal L_1(\mathbb{D})$; the $L^2$ embedding then gives convergence in
distributions and identifies the limit with the original Besov function.

For finite $p$, the strict upper bound $s<k+1/p$ is necessary for the
full dictionary: a ridge crossing the interior has a jump in its $k$th normal derivative, whose fractional seminorm diverges at order $1/p$ for $1<p<\infty$.
For $p=\infty$, the dictionary is bounded but not compact in
$W^{k,\infty}(\Omega)$.
Corollary~\ref{cor:ambient-sobolev} shows that
Theorems~\ref{Thm:BesovEmbeddingVariation}
and~\ref{thm:variation-to-besov} hold with the same Besov thresholds for all
these ambient spaces. Therefore, we can set $X=L^2(\Omega)$ in the proofs.

\subsection{Besov spaces}
Having established the representation and completeness properties of the
target space, we now introduce the source regularity scale.
We use the standard Littlewood--Paley description of Besov spaces from
\cite{Triebel,FrazierJawerth1990}.

We choose a radially nonincreasing function
$\varphi_0\in C_c^\infty(\mathbb{R}^d)$ and define an inhomogeneous
Littlewood--Paley resolution $\{\varphi_j\}_{j=0}^\infty$ by
\begin{align}
&0\le\varphi_0\le1,\qquad
\varphi_0(\xi)=1\quad\text{if }|\xi|\leq1,\qquad
\supp\varphi_0\subset\{\xi:|\xi|\le2\},
\label{eq:LP0}\\
&\begin{aligned}
\varphi_j(\xi)&:=\varphi_0(2^{-j}\xi)-\varphi_0(2^{-j+1}\xi),
\qquad j\geq1,\\
0\leq\varphi_j\leq1,\qquad
\supp\varphi_j&\subset\{\xi:2^{j-1}\le|\xi|\le2^{j+1}\},
\qquad j\ge1,
\end{aligned}
\notag\\
&|D^\alpha\varphi_j(\xi)|\lesssim 2^{-j|\alpha|},
\qquad j\ge0,
\label{eq:der}\\
&\sum_{j=0}^\infty\varphi_j(\xi)=1.
\label{eq:LPsum}
\end{align}
For later use, let
\[
\varphi(\xi):=\varphi_0(\xi)-\varphi_0(2\xi).
\]
By \eqref{eq:LP0}, $\varphi$ vanishes on a neighborhood of the origin.
Consequently, $\varphi^\vee$ has moments of every order equal to zero:
\begin{align*}
 \int_{\mathbb{R}^d}x^\alpha\varphi^\vee(x)\,\dd x=0,
 \qquad \alpha\in\mathbb N_0^d.
\end{align*}
Moreover, $\varphi_j(\xi)=\varphi(2^{-j}\xi)$ for every $j\geq1$, so
Fourier scaling gives
\begin{align*}
\varphi_j^\vee(x)=2^{jd}\varphi^\vee(2^jx).
\end{align*}
For $f\in\mathcal S'(\mathbb{R}^d)$, we define the Littlewood--Paley blocks by
\begin{equation*}
\Delta_j f:=(\varphi_j\widehat f)^\vee=\varphi_j^\vee*f.
\end{equation*}
Equation~\eqref{eq:LPsum} gives
\begin{equation}\label{eq:BesovSum}
f=\sum_{j=0}^{\infty}\Delta_j f\qquad\text{in }\mathcal S'(\mathbb{R}^d).
\end{equation}
For $s>0$ and $0<p,q\leq\infty$, the Besov quasi-norm is
\begin{equation*}
\|f\|_{B^s_{p,q}(\mathbb{R}^d)}
:=
\begin{cases}
\displaystyle
\left(\sum_{j\ge0}2^{jsq}\|\Delta_jf\|_{L^p(\mathbb{R}^d)}^q\right)^{1/q},
&0<q<\infty,\\[2mm]
\displaystyle
\sup_{j\ge0}2^{js}\|\Delta_jf\|_{L^p(\mathbb{R}^d)},
&q=\infty.
\end{cases}
\end{equation*}

For a bounded Lipschitz domain $\Omega$, we define the restriction space by
\begin{equation*}
B^s_{p,q}(\Omega)
:=\{f\in\mathcal D'(\Omega):
f=f_e|_\Omega\quad\text{for some }f_e\in B^s_{p,q}(\mathbb{R}^d)\},
\end{equation*}
with quotient quasi-norm
\begin{equation}
\label{eq:Besov-domain-norm}
\|f\|_{B^s_{p,q}(\Omega)}
:=\inf_{\substack{f_e\in B^s_{p,q}(\mathbb{R}^d)\\f_e|_\Omega=f}}
\|f_e\|_{B^s_{p,q}(\mathbb{R}^d)}.
\end{equation}
For brevity, we set $s_0:=k+d/p$.

The definition reduces the forward embedding to controlling one dyadic
block at a time and then summing the resulting variation norms. The next
section develops precisely the local representation needed for the
single-block estimate.

\section{Localization of one Littlewood--Paley block}

This section develops the main ingredients for the proof of
Theorem~\ref{Thm:BesovEmbeddingVariation}. The argument has two parts. We
first decompose a band-limited function into spatial atoms with an
$\ell^p$ coefficient bound. We then estimate the variation norm of each
atom through a Fourier--Radon representation. We begin with the classical
Peetre maximal estimate for band-limited functions
\cite{Peetre1975,Peetre1976}.

Before stating the estimate, we fix the pointwise convention used below.
For $g\in\mathcal S'(\mathbb{R}^d)$, write
$\widehat g:=\mathcal Fg$ for its Fourier transform. If $\widehat g$ has
compact support, inverse Fourier transformation identifies $g$ with a
canonical smooth, slowly growing representative. Whenever $g$ also has
an $L^p$ representative, the two agree almost everywhere. All pointwise
derivatives below refer to this smooth representative.

\begin{lemma}
\label{lem:Peetre}
Let $0<p<\infty$, $L\in\mathbb N_0$, and $N>d/p$. Suppose that
$j\in\mathbb N_0$ and $g\in\mathcal S'(\mathbb{R}^d)$ has an
$L^p$ representative satisfying, for some $C>0$,
\begin{equation*}
\supp\widehat g\subset B(0,C2^j).
\end{equation*}
We define the Peetre maximal function by
\begin{equation*}
\mathcal{P}_{j,N}^Lg(x)
:=
\sup_{y\in\mathbb{R}^d}
\displaystyle\sum_{|\alpha|\le L}\frac{2^{-j|\alpha|}|D^\alpha g(y)|}
{(1+2^j|x-y|)^N}.
\end{equation*}
The following estimate holds:
\begin{equation*}
\|\Pmax_{j,N}^Lg\|_{L^p(\mathbb{R}^d)}\lesssim\|g\|_{L^p(\mathbb{R}^d)}.
\end{equation*}
\end{lemma}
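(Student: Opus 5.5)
We plan to reduce Lemma~\ref{lem:Peetre} to the scalar Peetre inequality and the Hardy--Littlewood maximal theorem. First we rescale. Set $g_j(x):=g(2^{-j}x)$. Then $\supp\widehat{g_j}\subset B(0,C)$, and $2^{-j|\alpha|}D^\alpha g(y)=(D^\alpha g_j)(2^jy)$. Moreover, $\Pmax^L_{j,N}g(x)=\Pmax^L_{0,N}g_j(2^jx)$, and both sides of the desired inequality scale by the same factor $2^{-jd/p}$. It therefore suffices to treat $j=0$ with spectrum in $B(0,C)$, provided all constants depend only on $C,d,p,N,L$.

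Next we reduce the derivative terms to the undifferentiated case. Fix $\psi\in\mathcal S(\mathbb{R}^d)$ with $\widehat\psi=1$ on $B(0,C)$. Then $g=\psi*g$ and $D^\alpha g=(D^\alpha\psi)*g$. This gives
\[
|D^\alpha g(y)|\le\int|D^\alpha\psi(y-z)||g(z)|\,\dd z
\le C_M\,\sup_z\frac{|g(z)|}{(1+|x-z|)^N}\int\frac{(1+|x-z|)^N}{(1+|y-z|)^M}\,\dd z .
\]
Using $(1+|x-z|)\le(1+|x-y|)(1+|y-z|)$ and taking $M>N+d$, we get $|D^\alpha g(y)|\lesssim(1+|x-y|)^N\,g^*_N(x)$, where $g^*_N(x):=\sup_y|g(y)|(1+|x-y|)^{-N}$. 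Hence $\Pmax^L_{0,N}g\lesssim g^*_N$ pointwise. Here we use that $g\in L^p$ with compact spectrum, so $g$ is smooth and polynomially bounded, the convolution identity is legitimate, and $g^*_N(x)<\infty$ whenever $N$ exceeds the growth order. This is the standard justification step, which we expect to handle by first working with $\sup_y|g(y)|(1+|x-y|)^{-N'}$ for large $N'$ and then using the Plancherel--Pólya bound showing $g$ is bounded.

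The main step is Peetre's pointwise bound
\[
g^*_N(x)\lesssim \big(\MHL(|g|^r)(x)\big)^{1/r},\qquad r:=d/N<p .
\]
To prove it, fix $y$ and $\delta\in(0,1]$. By the mean value theorem, for $|z-y|<\delta$,
\[
|g(y)|\le|g(z)|+\delta\sup_{|w-y|<\delta}|\nabla g(w)| .
\]
We average $|g(y)|^r$ over $B(y,\delta)$, using $(a+b)^r\le a^r+b^r$ when $r\le1$ and the corresponding convexity bound otherwise. The average of $|g|^r$ over $B(y,\delta)$ is at most a constant times $\delta^{-d}(1+|x-y|)^d\MHL(|g|^r)(x)$. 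For the gradient term, we apply the previous step with $L=1$ to bound it by $(1+|x-y|)^N g^*_N(x)$. Dividing by $(1+|x-y|)^N$ and taking the supremum over $y$ yields
\[
g^*_N(x)\lesssim \delta^{-d/r}\big(\MHL(|g|^r)(x)\big)^{1/r}+\delta\, g^*_N(x),
\]
where we used $N=d/r$. Choosing $\delta$ small and absorbing the last term, which is legitimate because $g^*_N(x)<\infty$, gives the claim.

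Finally, since $p/r>1$, the Hardy--Littlewood maximal theorem on $L^{p/r}$ gives
\[
\|(\MHL|g|^r)^{1/r}\|_{L^p}\lesssim\||g|^r\|_{L^{p/r}}^{1/r}=\|g\|_{L^p}.
\]
Together with the previous steps, this proves the lemma. We expect the only delicate point to be the finiteness of $g^*_N(x)$ needed for absorption. If this is problematic, we will run the argument with the weight $N'$ large, where finiteness follows from the bound $\|g\|_\infty\lesssim\|g\|_p$ for band-limited $g$, and then deduce the estimate for $N$.
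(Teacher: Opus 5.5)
Your proposal is correct and follows the same route as the paper: Peetre's pointwise bound $\mathcal P^L_{j,N}g\lesssim(\mathcal M|g|^r)^{1/r}$ for some $r<p$, followed by the Hardy--Littlewood maximal theorem on $L^{p/r}$. The paper simply cites the derivative form of Peetre's estimate (with $Nr>d$) from Stein and Triebel, whereas you rescale to $j=0$ and prove it yourself with $r=d/N$ via the reproducing kernel and the mean-value/absorption argument, which is also valid. Your absorption step is sound, since finiteness of $g^*_N$ follows from the Plancherel--P\'olya bound $\|g\|_\infty\lesssim\|g\|_p$. If you prefer not to cite that bound, first run the argument with a large $N'$, where finiteness follows from the polynomial growth of band-limited tempered distributions. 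That gives $\|g^*_{N'}\|_{L^p}\lesssim\|g\|_{L^p}$ and hence boundedness of $g$, after which you can rerun with the given $N$.
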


\begin{proof}
We choose $0<r<p$ such that $Nr>d$. The standard derivative form of
Peetre's estimate for band-limited functions gives
\begin{equation*}
\mathcal{P}_{j,N}^Lg(x)
\le C\bigl[\MHL(|g|^r)(x)\bigr]^{1/r},
\end{equation*}
where $\MHL$ is the Hardy--Littlewood maximal operator. See
\cite{Stein1970,Triebel}. Since $p/r>1$, it follows from the
Hardy--Littlewood maximal theorem that
\[
\|\Pmax_{j,N}^Lg\|_{L^p(\mathbb{R}^d)}
\lesssim\|\MHL(|g|^r)\|_{L^{p/r}(\mathbb{R}^d)}^{1/r}
\lesssim\||g|^r\|_{L^{p/r}(\mathbb{R}^d)}^{1/r}
=\|g\|_{L^p(\mathbb{R}^d)},
\]
which proves the estimate of the lemma.
\end{proof}

The next lemma converts a band-limited function into spatially localized
atoms at the reciprocal frequency scale. This is the form of atomic
localization needed below. Compare the classical $\varphi$-transform and
atomic decompositions in \cite{FrazierJawerth1990,Triebel}.

\begin{lemma}
\label{lem:LocalizedAtomDecomposition}
Let $0<p<\infty$ and $j,L\in\mathbb N_0$. Suppose that
$g\in\mathcal S'(\mathbb{R}^d)$ has an $L^p$ representative and satisfies
\begin{equation*}
\supp\widehat g\subset B(0,c2^j)
\end{equation*}
for some $c>0$. Then $g$ admits a decomposition
\begin{equation*}
g=\sum_{\nu\in\mathbb Z^d}\lambda_{j,\nu}a_{j,\nu},
\end{equation*}
with a constant $C>0$ independent of $j$ and $\nu$ such that
\begin{align}
\supp a_{j,\nu}&\subset B(2^{-j}\nu,C2^{-j}),
\label{eq:a-support}\\
|D^\alpha a_{j,\nu}(x)|&\lesssim2^{j|\alpha|},
\qquad |\alpha|\leq L,
\label{eq:a-derivative}\\
\left(\sum_{\nu\in\mathbb{Z}^d}|\lambda_{j,\nu}|^p\right)^{1/p}
&\lesssim 2^{jd/p}\|g\|_{L^p}.
\label{eq:lambda-lp}
\end{align}
\end{lemma}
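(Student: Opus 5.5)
Fix a smooth partition of unity adapted to the lattice $2^{-j}\mathbb Z^d$. Choose $\psi\in C_c^\infty(\mathbb{R}^d)$ with $\supp\psi\subset B(0,C_0)$ and
\[
\sum_{\nu\in\mathbb Z^d}\psi(x-\nu)=1 .
\]
For instance, take a nonnegative bump that is positive on $[0,1]^d$ and normalize it by its periodization. Set $\psi_{j,\nu}(x):=\psi(2^jx-\nu)$. Then
\[
g=\sum_{\nu}\psi_{j,\nu}\,g
\]
pointwise, with locally finite overlap, because $g$ has a smooth representative (its Fourier support is compact). Define
\[
\lambda_{j,\nu}:=\sup_{y\in B(2^{-j}\nu,C_02^{-j})}\ \sum_{|\alpha|\le L}2^{-j|\alpha|}|D^\alpha g(y)|,
\qquad
a_{j,\nu}:=\lambda_{j,\nu}^{-1}\psi_{j,\nu}g ,
\]
with $a_{j,\nu}:=0$ when $\lambda_{j,\nu}=0$. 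The support condition \eqref{eq:a-support} is immediate with $C=C_0$.

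For the derivative bound \eqref{eq:a-derivative}, apply the Leibniz rule:
\[
D^\alpha(\psi_{j,\nu}g)=\sum_{\beta\le\alpha}\binom{\alpha}{\beta}\,2^{j|\alpha-\beta|}(D^{\alpha-\beta}\psi)(2^j\cdot-\nu)\,D^\beta g .
\]
On the support, each term is bounded by $2^{j|\alpha-\beta|}\,2^{j|\beta|}\lambda_{j,\nu}$. Hence $|D^\alpha a_{j,\nu}|\lesssim 2^{j|\alpha|}$ for $|\alpha|\le L$, with constants depending only on $\psi$ and $L$.

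The main point is the coefficient bound \eqref{eq:lambda-lp}, and this is where Lemma~\ref{lem:Peetre} enters. Fix $N>d/p$. For $x$ in the cube $Q_{j,\nu}:=2^{-j}(\nu+[0,1)^d)$ and $y\in B(2^{-j}\nu,C_02^{-j})$, we have
\[
1+2^j|x-y|\le 1+C_0+\sqrt d .
\]
It follows that
\[
\lambda_{j,\nu}\le (1+C_0+\sqrt d)^N\,\Pmax^L_{j,N}g(x)\qquad\text{for all }x\in Q_{j,\nu}.
\]
Raising to the power $p$ and averaging over $Q_{j,\nu}$, whose measure is $2^{-jd}$, gives
\[
|\lambda_{j,\nu}|^p\lesssim 2^{jd}\int_{Q_{j,\nu}}(\Pmax^L_{j,N}g)^p .
\]
The cubes $Q_{j,\nu}$ are disjoint. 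Summing over $\nu$ and applying Lemma~\ref{lem:Peetre} with constant $c$ yields
\[
\sum_\nu|\lambda_{j,\nu}|^p\lesssim 2^{jd}\|g\|_{L^p}^p .
\]
This is \eqref{eq:lambda-lp}.

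The obstacle I expect is not the estimate but bookkeeping. The implicit constants must be uniform in $j$, which holds because Lemma~\ref{lem:Peetre}'s constant is independent of $j$, and because everything else is scale-invariant after the dilation by $2^j$. The series $\sum_\nu\lambda_{j,\nu}a_{j,\nu}$ is locally finite, so it converges pointwise to $g$. It also converges in $\mathcal S'$: $g$ is slowly growing, and the partial sums are dominated by $|g|$ times a bounded overlap constant. So the identity holds in the distributional sense as well.
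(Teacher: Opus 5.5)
Your proposal is correct and follows essentially the paper's argument: the same lattice partition of unity at scale $2^{-j}$, the Leibniz rule for the derivative bound, and the bound $\lambda_{j,\nu}\lesssim \mathcal P^L_{j,N}g(x)$ on each cube $2^{-j}(\nu+[0,1)^d)$ followed by Lemma~\ref{lem:Peetre}. The only difference is that the paper takes $\lambda_{j,\nu}:=\mathcal P^L_{j,N}g(2^{-j}\nu)$ and compares it to $\mathcal P^L_{j,N}g(x)$ via the triangle inequality, whereas your local supremum over the support ball makes the derivative bound immediate and reduces that comparison to a bounded-distance estimate.
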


\begin{proof}
We choose $\eta\in C_c^\infty(\mathbb{R}^d)$ such that
\begin{equation}
\label{eq:eta-POU}
\sum_{\nu\in\mathbb Z^d}\eta(x-\nu)=1,
\qquad x\in\mathbb{R}^d.
\end{equation}
For each $\nu\in\mathbb Z^d$, we set
\[
\eta_{j,\nu}(x):=\eta(2^j(x-\nu_j)),
\qquad \nu_j:=2^{-j}\nu.
\]
We fix $N>d/p$ and define
\begin{align*}
\lambda_{j,\nu}:=\Pmax_{j,N}^{L}g(\nu_j).
\end{align*}
If $\lambda_{j,\nu}=0$ for some $\nu$, its defining supremum forces
$g\equiv0$, and the conclusion is immediate. We may therefore assume
that every $\lambda_{j,\nu}>0$ and set
\begin{align*}
a_{j,\nu}(x):=\frac{\eta_{j,\nu}(x)g(x)}{\lambda_{j,\nu}}.
\end{align*}
The partition of unity \eqref{eq:eta-POU} gives the asserted
decomposition, and the compact support of $\eta$ gives
\eqref{eq:a-support}.

If $x\in\supp\eta_{j,\nu}$, then $2^j|x-\nu_j|\leq C$, and the definition
of the Peetre maximal function therefore implies
\begin{equation}
\label{eq:g-der-local}
|D^\gamma g(x)|\lesssim2^{j|\gamma|}\lambda_{j,\nu},
\qquad |\gamma|\le L.
\end{equation}
The scaled cutoff also satisfies
\begin{align}\label{eq:lambda_nu_j}
|D^\beta\eta_{j,\nu}(x)|\lesssim2^{j|\beta|}.
\end{align}

Combining Leibniz's rule with \eqref{eq:g-der-local} and
\eqref{eq:lambda_nu_j}, we obtain
\begin{align*}
|D^\alpha a_{j,\nu}(x)|
&\le \lambda_{j,\nu}^{-1}
\sum_{\beta\le\alpha}\binom{\alpha}{\beta}
|D^\beta\eta_{j,\nu}(x)|
|D^{\alpha-\beta}g(x)|\lesssim 2^{j|\alpha|},
\end{align*}
which proves \eqref{eq:a-derivative}.

It remains to prove \eqref{eq:lambda-lp}. Set
\[
\mathbb{I}_{j,\nu}:=\nu_j+2^{-j}[0,1)^d.
\]
If $x\in\mathbb I_{j,\nu}$, the definition of the Peetre maximal
function and the triangle inequality
\begin{align*}
    |x-y|\leq |x-\nu_j|+|\nu_j-y|
\end{align*}
imply the bound
\[
\lambda_{j,\nu}\le C\Pmax_{j,N}^{L}g(x),
\]
and integration over $\mathbb{I}_{j,\nu}$ yields
\[
2^{-jd}|\lambda_{j,\nu}|^p
\le C\int_{\mathbb{I}_{j,\nu}}|\Pmax_{j,N}^{L}g(x)|^p\,\dd x.
\]
Summing over $\nu\in\mathbb Z^d$ and applying
Lemma~\ref{lem:Peetre}, we obtain
\[
\sum_{\nu\in\mathbb{Z}^d}|\lambda_{j,\nu}|^p
\lesssim2^{jd}\|\Pmax_{j,N}^{L}g\|_{L^p(\mathbb{R}^d)}^p
\lesssim2^{jd}\|g\|_{L^p(\mathbb{R}^d)}^p,
\]
and taking the $p$-th root completes the proof.
\end{proof}

The preceding decomposition isolates the spatial coefficients that produce
the critical factor $2^{jd/p}$. To control the variation norm of the
resulting atoms, we next connect smooth compactly supported functions with
ridge representations. The following estimate is the required
Fourier--Radon formula. See
\cite{MaoSiegelXu2024,Unser2023} for related ridge and Radon-transform
representations.

\begin{lemma}\label{lem:radon-variation}
Let $\Omega\subset\mathbb{R}^d$ be a bounded Lipschitz domain and let
$f\in C_c^\infty(\mathbb{R}^d)$. Define the filtered Radon profile by
\begin{equation*}
F_{\omega}(t)
:=
\int_{\mathbb{R}}|r|^{d-1}e^{\sqrt{-1}rt}\widehat{f}(r\omega)\,\dd r
\qquad
(\omega,t)\in\mathbb{S}^{d-1}\times\mathbb{R}.
\end{equation*}
Its $(k+1)$-th derivative is
\begin{equation*}
F_\omega^{(k+1)}(t)
=\int_{\mathbb{R}}(\sqrt{-1}r)^{k+1}|r|^{d-1}e^{\sqrt{-1}rt}
\widehat f(r\omega)\,\dd r.
\end{equation*}
The following estimate holds:
\begin{equation*}
\|f\|_{\mathcal L_1(\mathbb{D})}
\lesssim
\|F_\omega^{(k+1)}\|_{L^1(\Sph^{d-1}\times\mathbb{R})}
+\|f\|_{L^2(\Omega)}.
\end{equation*}
\end{lemma}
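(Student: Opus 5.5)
We will derive a ridge representation of $f$ on $\Omega$ from Fourier inversion in polar coordinates followed by a one-dimensional Peano (Taylor) formula, and then read off the variation norm from the total variation of the resulting measure.

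\textbf{Step 1: Fourier inversion in polar form.} Writing $\xi=r\omega$ with $r\in\mathbb{R}$, $\omega\in\Sph^{d-1}$, and halving to account for the double cover, we have
\[
f(x)=\frac{1}{2(2\pi)^d}\int_{\Sph^{d-1}}\int_{\mathbb{R}}|r|^{d-1}e^{\sqrt{-1}r\,\omega\cdot x}\widehat f(r\omega)\,\dd r\,\dd\omega
=\frac{1}{2(2\pi)^d}\int_{\Sph^{d-1}}F_\omega(\omega\cdot x)\,\dd\omega .
\]
Since $\widehat f$ is Schwartz, $F_\omega$ is smooth and rapidly decaying, uniformly in $\omega$. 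Each $F_\omega^{(m)}$ is continuous in $(\omega,t)$, so all the interchanges of integrals below are justified.

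\textbf{Step 2: Peano formula on $[-c,c]$.} For $|t|<c$, Taylor's formula with integral remainder about $-c$ gives
\[
F_\omega(t)=\sum_{m=0}^{k}\frac{F_\omega^{(m)}(-c)}{m!}(t+c)^m+\frac{1}{k!}\int_{-c}^{c}\sigma_k(t-b)F_\omega^{(k+1)}(b)\,\dd b .
\]
Substituting $t=\omega\cdot x$, which lies in $(-c,c)$ for $x\in\Omega$ by the choice of $c$, yields $f=f_{\mathrm{ridge}}+P$ on $\Omega$. Here
\[
f_{\mathrm{ridge}}(x)=\int_{\Sph^{d-1}\times[-c,c]}\sigma_k(\omega\cdot x-b)\,\dd\mu(\omega,b),\qquad \dd\mu=\frac{F_\omega^{(k+1)}(b)}{2(2\pi)^dk!}\,\dd\omega\,\dd b,
\]
and $P(x)=\sum_{m\le k}\int_{\Sph^{d-1}}c_m(\omega)(\omega\cdot x+c)^m\,\dd\omega$ is a polynomial of degree at most $k$. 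By Lemma~\ref{lem:VariationNormEquivalence},
\[
\|f_{\mathrm{ridge}}\|_{\mathcal L_1(\mathbb{D})}\le\|\mu\|_{\mathrm{TV}}\lesssim\|F^{(k+1)}_\omega\|_{L^1(\Sph^{d-1}\times\mathbb{R})}.
\]

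\textbf{Step 3: the polynomial part (main obstacle).} The boundary coefficients $F_\omega^{(m)}(-c)$ are not controlled by the $L^1$ norm of $F^{(k+1)}$, so we avoid estimating them directly. Instead we use two facts.

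First, $\mathcal P_k$, the space of polynomials of degree at most $k$ restricted to $\Omega$, lies in $\mathcal L_1(\mathbb{D})$. Indeed, $(\omega\cdot x-b)^k=\sigma_k(\omega\cdot x-b)$ for $b=-c$, so every $(\omega\cdot x+c)^k$ is a single atom. Taking finite differences in $b$ of such atoms near $b=-c$, with $b$ still in $[-c,c]$, produces every power $(\omega\cdot x-b_0)^m$ with $m\le k$. Since these powers span $\mathcal P_k$, we get $\mathcal P_k\subset\mathcal L_1(\mathbb{D})$. Because $\mathcal P_k$ is finite-dimensional, all norms on it are equivalent, and in particular $\|P\|_{\mathcal L_1(\mathbb{D})}\lesssim\|P\|_{L^2(\Omega)}$.

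Second, $P=f-f_{\mathrm{ridge}}$ on $\Omega$, so by the $L^2$ case of Corollary~\ref{cor:ambient-sobolev},
\[
\|P\|_{L^2(\Omega)}\le\|f\|_{L^2(\Omega)}+\sup_{v\in\mathbb{D}}\|v\|_{L^2(\Omega)}\,\|f_{\mathrm{ridge}}\|_{\mathcal L_1(\mathbb{D})}\lesssim\|f\|_{L^2(\Omega)}+\|F^{(k+1)}_\omega\|_{L^1}.
\]

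\textbf{Step 4: conclusion.} The triangle inequality in $\mathcal L_1(\mathbb{D})$ combines Steps 2 and 3:
\[
\|f\|_{\mathcal L_1(\mathbb{D})}\le\|f_{\mathrm{ridge}}\|_{\mathcal L_1(\mathbb{D})}+\|P\|_{\mathcal L_1(\mathbb{D})}\lesssim\|F^{(k+1)}_\omega\|_{L^1(\Sph^{d-1}\times\mathbb{R})}+\|f\|_{L^2(\Omega)},
\]
which is the asserted estimate.

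The only delicate points are the finite-dimensional norm-equivalence argument of Step 3, which replaces any direct estimate of the Taylor coefficients, and verifying that the finite-difference atoms stay within the admissible bias range $[-c,c]$.
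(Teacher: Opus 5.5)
Your proposal is correct and follows the paper's proof essentially step for step: polar Fourier inversion, the Peano remainder read as a ridge measure, and the polynomial part handled by finite-dimensional norm equivalence together with $\|P\|_{L^2(\Omega)}\le\|f\|_{L^2(\Omega)}+\|f_{\mathrm{ridge}}\|_{L^2(\Omega)}$. Expanding at $-c$ instead of an interior point $-R$, and using finite differences in $b$ instead of the paper's Vandermonde/polarization remark, are cosmetic; the one detail worth adding is that $F_\omega^{(k+1)}$ is real-valued (from $\overline{\widehat f(\xi)}=\widehat f(-\xi)$ and $r\mapsto -r$), which $\mu$ needs to be a real signed measure, and $F_\omega$ need not be rapidly decaying for even $d$, though you never use that.
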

\begin{proof}
Fourier inversion and polar coordinates give
\begin{align*}
f(x)
&=\frac{1}{(2\pi)^d}\int_{\mathbb{R}^d}
 e^{\sqrt{-1}x\cdot\xi}\widehat f(\xi)\,\dd\xi\\
&=\frac{1}{2(2\pi)^d}\int_{\Sph^{d-1}}\int_{\mathbb{R}}
 |r|^{d-1}e^{\sqrt{-1}r\omega\cdot x}\widehat f(r\omega)
\,\dd r\,\dd\omega\\
&=\frac{1}{2(2\pi)^d}\int_{\Sph^{d-1}}
F_\omega(\omega\cdot x)\,\dd\omega.
\end{align*}
We choose $R\in(0,c)$ so that
\[
 |\omega\cdot x|<R,
 \qquad x\in\Omega,\quad \omega\in\Sph^{d-1},
\]
which is possible by the choice of $c$. We set
$C_d:=1/[2(2\pi)^d]$. Taylor's formula with integral remainder, applied
to $F_\omega$ at $-R$ (see \cite{DeVoreLorentz1993}), gives, for
$x\in\Omega$,
\begin{align*}
f(x)={}&C_d\int_{\Sph^{d-1}}\sum_{j=0}^k
\frac{F_\omega^{(j)}(-R)}{j!}(\omega\cdot x+R)^j\,\dd\omega\\
&+\frac{C_d}{k!}\int_{\Sph^{d-1}}\int_{-R}^{\omega\cdot x}
F_\omega^{(k+1)}(t)(\omega\cdot x-t)^k\,\dd t\,\dd\omega.
\end{align*}
We define the polynomial part by
\[
p(x):=C_d\int_{\Sph^{d-1}}\sum_{j=0}^k
\frac{F_\omega^{(j)}(-R)}{j!}(\omega\cdot x+R)^j\,\dd\omega.
\]
It follows from this definition that $p$ is a polynomial of degree at
most $k$. Write the remainder as
\[
g(x)=\frac{C_d}{k!}\int_{\Sph^{d-1}}\int_{-R}^{R}
F_\omega^{(k+1)}(t)\sigma_k(\omega\cdot x-t)
\,\dd t\,\dd\omega.
\]
Because $f$ is real-valued, conjugate symmetry of $\widehat f$ shows that
$F_\omega^{(k+1)}$ is real-valued. Push the signed measure with density
$(C_d/k!)F_\omega^{(k+1)}(t)$ on
$\Sph^{d-1}\times[-R,R]$ forward under
$(\omega,t)\mapsto\sigma_k(\omega\cdot{}-t)$. Its total variation does not
increase, so Lemma~\ref{lem:VariationNormEquivalence} gives
\[
\|g\|_{\mathcal L_1(\mathbb{D})}
\lesssim\int_{\Sph^{d-1}}\int_{-R}^{R}
|F_\omega^{(k+1)}(t)|\,\dd t\,\dd\omega.
\]
The restrictions to $\Omega$ of polynomials of degree at most $k$ form
a subspace of $\mathcal L_1(\mathbb{D})$. Indeed, whenever
$b<\inf_{x\in\Omega}\omega\cdot x$, the dictionary element
$\sigma_k(\omega\cdot x-b)$ agrees on $\Omega$ with the polynomial
$(\omega\cdot x-b)^k$. We choose distinct values
\[
 b_0,\ldots,b_k\in
 \left(-c,\inf_{\substack{x\in\Omega\\
 \omega\in\Sph^{d-1}}}\omega\cdot x\right).
\]
For a suitable finite set of directions $\omega$, the Vandermonde
identity in $b$, together with polarization in $\omega$,
shows that these ridge polynomials span every polynomial of degree at
most $k$. The equivalence of norms on this finite-dimensional space
therefore gives
\[
 \|p\|_{\mathcal L_1(\mathbb{D})}\lesssim\|p\|_{L^2(\Omega)}.
\]
Since $p=f-g$ on $\Omega$ and
$\|g\|_{L^2(\Omega)}\lesssim\|g\|_{\mathcal L_1(\mathbb{D})}$, the preceding
estimates imply
\[
\begin{aligned}
 \|f\|_{\mathcal L_1(\mathbb{D})}
 &\leq\|p\|_{\mathcal L_1(\mathbb{D})}
 +\|g\|_{\mathcal L_1(\mathbb{D})}\\
 &\lesssim \|f\|_{L^2(\Omega)}
 +\int_{\Sph^{d-1}}\int_{\mathbb{R}}
 |F_\omega^{(k+1)}(t)|\,\dd t\,\dd\omega,
\end{aligned}
\]
which completes the proof.
\end{proof}

We next derive the variation estimate for a smooth function localized at
scale $h>0$ by first proving a one-dimensional inverse Fourier estimate
and then applying the Fourier--Radon representation from
Lemma~\ref{lem:radon-variation}.

\begin{lemma}
\label{lem:1D-Fourier-L1}
For every $\Phi\in H^1(\mathbb{R})$, we have
\begin{equation*}
\|\Phi^\vee\|_{L^1(\mathbb{R})}
\lesssim\bigl(\|\Phi\|_{L^2(\mathbb{R})}+\|\Phi'\|_{L^2(\mathbb{R})}\bigr).
\end{equation*}
\end{lemma}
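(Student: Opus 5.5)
This is the classical Bernstein-type estimate. We prove it by Cauchy--Schwarz with a weight that makes $(1+|x|^2)^{-1}$ integrable, using Plancherel to convert weighted $L^2$ norms of $\Phi^\vee$ into $H^1$ norms of $\Phi$. Write $u:=\Phi^\vee$ and split
\[
\int_{\mathbb{R}}|u(x)|\,\dd x
=\int_{\mathbb{R}}(1+|x|^2)^{-1/2}\,(1+|x|^2)^{1/2}|u(x)|\,\dd x
\le\Bigl(\int_{\mathbb{R}}\frac{\dd x}{1+x^2}\Bigr)^{1/2}
\Bigl(\int_{\mathbb{R}}(1+x^2)|u(x)|^2\,\dd x\Bigr)^{1/2}.
\]
The first factor equals $\sqrt{\pi}$. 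For the second factor we use Plancherel twice. First, $\|u\|_{L^2}\simeq\|\Phi\|_{L^2}$. Second, $x\,u(x)$ is, up to a constant factor $\pm\sqrt{-1}$ depending on the sign convention, the inverse Fourier transform of $\Phi'$, so $\|x\,u\|_{L^2}\simeq\|\Phi'\|_{L^2}$. Combining the two with $\sqrt{a+b}\le\sqrt a+\sqrt b$ gives
\[
\|\Phi^\vee\|_{L^1(\mathbb{R})}\lesssim\|\Phi\|_{L^2(\mathbb{R})}+\|\Phi'\|_{L^2(\mathbb{R})}.
\]

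The only point needing care is the identity $(\Phi')^\vee(x)=c\,x\,\Phi^\vee(x)$ for a general $\Phi\in H^1(\mathbb{R})$, where the weak derivative need not be a Schwartz function. We will handle it in one of two equivalent ways. One option is to prove it in $\mathcal S'$: the distributional inverse Fourier transform of $\Phi'$ is $c\,x\,\Phi^\vee$. Since $\Phi'\in L^2$, Plancherel then shows that $x\,\Phi^\vee\in L^2$ with the stated norm. The other option is to prove the inequality first for $\Phi\in C_c^\infty(\mathbb{R})$ and extend by density of $C_c^\infty$ in $H^1$. If $\Phi_n\to\Phi$ in $H^1$, then $\Phi_n^\vee$ is Cauchy in $L^1$ by the inequality applied to differences, and it converges to $\Phi^\vee$ in $L^2$; hence the $L^1$ limit coincides with $\Phi^\vee$.

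Both routes are routine, so I expect no real obstacle. The step most worth writing carefully is the density argument that identifies the $L^1$ limit with $\Phi^\vee$, since this is exactly what the later localized-atom estimate relies on.
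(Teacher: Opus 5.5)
Your proposal is correct and matches the paper's proof: Cauchy--Schwarz against the weight $(1+t^2)^{-1/2}$, followed by Plancherel to identify $\|(1+t^2)^{1/2}\Phi^\vee\|_{L^2(\mathbb{R})}$ with the $H^1$ norm of $\Phi$. Your added justification of the identity $(\Phi')^\vee=c\,t\,\Phi^\vee$ for general $\Phi\in H^1(\mathbb{R})$, via $\mathcal S'$ or by density, fills in a step the paper leaves implicit.
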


\begin{proof}
By the Cauchy--Schwarz inequality and Plancherel's identity,
\begin{align*}
\|\Phi^\vee\|_{L^1(\mathbb{R})}
&=\int_{\mathbb{R}}(1+t^2)^{-1/2}(1+t^2)^{1/2}|\Phi^\vee(t)|\,\dd t\\
&\lesssim\left(\int_{\mathbb{R}}(1+t^2)|\Phi^\vee(t)|^2\,\dd t\right)^{1/2}\\
&\simeq(\|\Phi\|_{L^2(\mathbb{R})}^2+\|\Phi^\prime\|_{L^2(\mathbb{R})}^2)^{\frac{1}{2}},
\end{align*}
which proves the inequality of the lemma.
\end{proof}

The one-dimensional estimate now converts the scaled Fourier profiles into
an $L^1$ bound. Combined with the Fourier--Radon formula, it yields the
uniform variation estimate for a localized atom.

\begin{lemma}
\label{lem:atom-variation}
Fix $A>0$ and an integer $R_0>k+d+2$. Let $0<h\leq1$,
$x_0\in\mathbb{R}^d$, and $\psi\in C_c^\infty(\mathbb{R}^d)$ satisfying
$\supp\psi\subset B(0,A)$. Set
\begin{equation*}
\psi_h(x):=\psi\left(\frac{x-x_0}{h}\right).
\end{equation*}
The following estimate holds:
\begin{equation}\label{eq:uniform-atom-variation}
\|\psi_h\|_{\mathcal{L}_1(\mathbb{D})}
\lesssim h^{-k}
\max_{|\alpha|\leq R_0}\|D^\alpha\psi\|_{L^\infty(\mathbb{R}^d)}.
\end{equation}
The implicit constant may depend on $d$, $k$, $A$, $R_0$, and $\Omega$,
but is independent of $h$, $x_0$, and $\psi$.
\end{lemma}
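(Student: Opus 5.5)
We apply Lemma~\ref{lem:radon-variation} to $f=\psi_h$, which lies in $C_c^\infty(\mathbb{R}^d)$. This reduces the estimate to bounding two quantities: the $L^1(\Sph^{d-1}\times\mathbb{R})$ norm of $F_\omega^{(k+1)}$, and $\|\psi_h\|_{L^2(\Omega)}$. Set $M:=\max_{|\alpha|\le R_0}\|D^\alpha\psi\|_{L^\infty}$.

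\textbf{The $L^2$ term.} Since $h\le1$ and $\supp\psi\subset B(0,A)$, we have
\[
\|\psi_h\|_{L^2(\Omega)}\le |B(0,A)|^{1/2}h^{d/2}M\lesssim h^{-k}M .
\]

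\textbf{Scaling the Radon term.} We have $\widehat{\psi_h}(\xi)=h^de^{-\sqrt{-1}x_0\cdot\xi}\widehat\psi(h\xi)$. Substitute $\rho=hr$ and $\tau=(t-\omega\cdot x_0)/h$. The modulation $e^{-\sqrt{-1}r\omega\cdot x_0}$ becomes a translation in $t$, and translations do not change the $L^1$ norm. We obtain
\[
F^{(k+1)}_{\omega,h}(t)=h^{-(k+1)}G_\omega\bigl((t-\omega\cdot x_0)/h\bigr),\qquad G_\omega(\tau):=\int_{\mathbb{R}}(\sqrt{-1}\rho)^{k+1}|\rho|^{d-1}e^{\sqrt{-1}\rho\tau}\widehat\psi(\rho\omega)\,\dd\rho .
\]
Integrating in $t$ contributes one factor $h$, so
\[
\|F^{(k+1)}_{\omega,h}\|_{L^1(\mathbb{R})}=h^{-k}\|G_\omega\|_{L^1(\mathbb{R})} .
\]
It therefore suffices to show $\sup_\omega\|G_\omega\|_{L^1}\lesssim M$ uniformly in $\omega$.

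\textbf{Bounding $G_\omega$.} Up to a constant, $G_\omega=\Phi_\omega^\vee$ with
\[
\Phi_\omega(\rho):=(\sqrt{-1}\rho)^{k+1}|\rho|^{d-1}\widehat\psi(\rho\omega).
\]
We apply Lemma~\ref{lem:1D-Fourier-L1}. First, $\Phi_\omega\in H^1$: the factor $|\rho|^{d-1}$ is Lipschitz for $d\ge2$ and constant for $d=1$, and the factor $\rho^{k+1}$ with $k\ge1$ keeps the derivative locally bounded near $\rho=0$. For the bounds, integration by parts gives
\[
|\xi^\beta D^\gamma\widehat\psi(\xi)|\le\|D^\beta(x^\gamma\psi)\|_{L^1}\lesssim_A M \quad\text{for }|\beta|\le R_0,
\]
using $\supp\psi\subset B(0,A)$. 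Hence
\[
|\widehat\psi(\xi)|+|\nabla\widehat\psi(\xi)|\lesssim M(1+|\xi|)^{-R_0}.
\]
Then $|\Phi_\omega(\rho)|+|\Phi_\omega'(\rho)|\lesssim M(1+|\rho|)^{k+d-R_0}$. This is square-integrable because $R_0>k+d+2$ gives an exponent below $-2$. Lemma~\ref{lem:1D-Fourier-L1} then yields $\|G_\omega\|_{L^1}\lesssim M$ uniformly in $\omega$. Integrating over $\Sph^{d-1}$ and combining with the two bounds above gives \eqref{eq:uniform-atom-variation}.

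\textbf{Main obstacle.} The one genuinely delicate point is the regularity of $\Phi_\omega$ at $\rho=0$ when $d$ is even or small. That $|\rho|^{d-1}$ is non-smooth there is harmless for an $H^1$ bound, since we only need one derivative and the factor $\rho^{k+1}$ vanishes to order at least two. The rest is bookkeeping: constants depend only on $d,k,A,R_0$, and on $\Omega$ through $R$ in Lemma~\ref{lem:radon-variation}.
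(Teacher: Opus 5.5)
Your proposal is correct and follows the paper's proof essentially step for step. You reduce via Lemma~\ref{lem:radon-variation}, bound the $L^2$ term by $h^{d/2}$, and extract the factor $h^{-k}$ by exact Fourier scaling. You then bound the unit-scale profile uniformly in $\omega$ using the integration-by-parts decay of $\widehat\psi$ and $\nabla\widehat\psi$ together with Lemma~\ref{lem:1D-Fourier-L1}; your remark on the $H^1$ regularity of $\Phi_\omega$ at the origin is a detail the paper leaves implicit.
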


\begin{proof}
We write
\[
 M_{R_0}(\psi):=
 \max_{|\alpha|\leq R_0}\|D^\alpha\psi\|_{L^\infty(\mathbb{R}^d)}.
\]
We define the one-dimensional Radon profile by
\begin{align*}
 F_{\omega,h}(t):=\int_{\mathbb{R}}|r|^{d-1}
 \widehat{\psi}_h(r\omega)e^{\sqrt{-1}rt}\,\dd r.
\end{align*}
Differentiating $F_{\omega,h}$ $k+1$ times in $t$ gives
\begin{equation}
\label{eq:F-derivative}
F_{\omega,h}^{(k+1)}(t)
=\int_{\mathbb{R}}(\sqrt{-1}r)^{k+1}|r|^{d-1}\widehat \psi_h(r\omega)e^{\sqrt{-1}rt}\,\dd r.
\end{equation}
A change of variables gives
\[
 \|\psi_h\|_{L^2(\Omega)}
 \leq h^{d/2}\|\psi\|_{L^2(\mathbb{R}^d)}
 \lesssim h^{-k}M_{R_0}(\psi),
\]
so Lemma~\ref{lem:radon-variation} reduces the proof to showing that
\[
\int_{\Sph^{d-1}}
\|F_{\omega,h}^{(k+1)}\|_{L^1(\mathbb{R})}\,\dd\omega
\lesssim h^{-k}M_{R_0}(\psi).
\]

\textbf{Step 1: Exact profile scaling.}
The Fourier scaling of the definition of $\psi_h$ is
\begin{equation}
\label{eq:ahat-scaling}
\widehat \psi_h(\xi)=h^d e^{-\sqrt{-1}x_0\cdot\xi}\widehat \psi(h\xi).
\end{equation}
For $\omega\in\Sph^{d-1}$, define
\begin{align*}
M_\omega(s):=(\sqrt{-1}s)^{k+1}|s|^{d-1}\widehat \psi(s\omega),
\end{align*}
so that substituting \eqref{eq:ahat-scaling} into \eqref{eq:F-derivative} and
changing variables $s=hr$ gives
\begin{equation*}
F_{\omega,h}^{(k+1)}(t)
=2\pi h^{-k-1}
M_\omega^\vee\left(\frac{t-\omega\cdot x_0}{h}\right),
\end{equation*}
and consequently
\begin{equation}
\label{eq:F-L1-scaling}
\|F_{\omega,h}^{(k+1)}\|_{L^1(\mathbb{R})}
=2\pi h^{-k}\|M_\omega^{\vee}\|_{L^1(\mathbb{R})}.
\end{equation}
It remains only to prove a unit-scale estimate uniformly in
$\omega\in\Sph^{d-1}$.

\textbf{Step 2: Uniform unit-scale estimate.}
Since $\supp\psi\subset B(0,A)$, integration by parts up to order
$R_0$ gives
\begin{align*}
\begin{aligned}
|\widehat\psi(s\omega)|&\lesssim
M_{R_0}(\psi)(1+|s|)^{-R_0},\\
|\nabla_\xi\widehat\psi(s\omega)|&\lesssim
M_{R_0}(\psi)(1+|s|)^{-R_0},
\end{aligned}
\end{align*}
which implies uniformly in $\omega$ that
\begin{align*}
|M_{\omega}(s)|
&\lesssim M_{R_0}(\psi)|s|^{k+d}(1+|s|)^{-R_0},\\
|M'_\omega(s)|
&\lesssim M_{R_0}(\psi)
\bigl(|s|^{k+d-1}+|s|^{k+d}\bigr)(1+|s|)^{-R_0}.
\end{align*}
The choice $R_0>k+d+2$ therefore gives
\begin{align*}
\sup_{\omega\in\Sph^{d-1}}
\left(\|M_\omega\|_{L^2(\mathbb{R})}+\|M'_\omega\|_{L^2(\mathbb{R})}\right)
\lesssim M_{R_0}(\psi).
\end{align*}
Combining this estimate with Lemma~\ref{lem:1D-Fourier-L1} yields
\begin{equation*}
\sup_{\omega\in\Sph^{d-1}}\|M_\omega^{\vee}\|_{L^1(\mathbb{R})}
\lesssim M_{R_0}(\psi).
\end{equation*}
Since $\Sph^{d-1}$ has finite measure,
\eqref{eq:F-L1-scaling} gives
\begin{equation*}
\int_{\Sph^{d-1}}\int_{\mathbb{R}}|F_{\omega,h}^{(k+1)}(t)|\,\dd t\,\dd\omega
\lesssim h^{-k}M_{R_0}(\psi).
\end{equation*}
This completes the proof.
\end{proof}

We have therefore obtained both components needed in the forward proof:
Lemma~\ref{lem:LocalizedAtomDecomposition} supplies spatial atoms and their
coefficient bound, while Lemma~\ref{lem:atom-variation} controls the
variation norm of every atom after rescaling. We now assemble these
estimates across locations and dyadic scales.

\section{Forward embedding and sharpness}

The proof of Theorem \ref{Thm:BesovEmbeddingVariation} proceeds in two
steps. We apply the atomic decomposition and the single-atom variation
estimate directly to a near-minimal Besov extension at each dyadic scale,
and then sum over the scales in the critical and above-critical regimes.

\begin{proof}
We fix $f\in B_{p,q}^s(\Omega)$. By \eqref{eq:Besov-domain-norm}, for every
$\varepsilon>0$ there exists $f_e\in B_{p,q}^s(\mathbb{R}^d)$ such that
$f_e|_\Omega=f$ and
\[
\|f_e\|_{B_{p,q}^s(\mathbb{R}^d)}
\leq \|f\|_{B_{p,q}^s(\Omega)}+\varepsilon.
\]
Since the variation norm depends only on the restriction to $\Omega$,
\begin{equation*}
\|f\|_{\mathcal L_1(\mathbb{D})}
=\|f_e|_\Omega\|_{\mathcal L_1(\mathbb{D})}.
\end{equation*}
We therefore estimate the latter norm directly.

\textbf{Step 1: Dyadic atomic estimate.}
We apply Lemma~\ref{lem:LocalizedAtomDecomposition} to each
$f_j:=\Delta_jf_e$, using the integer $L=R_0>k+d+2$ from
Lemma~\ref{lem:atom-variation}. Retaining only the atoms whose supports
meet $\Omega$, we obtain on $\Omega$
\begin{align*}
f_j|_\Omega
=\sum_{\substack{\nu\in\mathbb Z^d\\
\supp a_{j,\nu}\cap\Omega\neq\varnothing}}
\lambda_{j,\nu}a_{j,\nu}|_\Omega.
\end{align*}
All sums over $\nu$ below are restricted to the retained atoms. As
$\Omega$ is bounded, this index set is finite for every fixed $j$.
For each atom, we have
\begin{equation*}
\|a_{j,\nu}|_\Omega\|_{\mathcal L_1(\mathbb{D})}\lesssim2^{jk}.
\end{equation*}
Indeed, by \eqref{eq:a-support} and \eqref{eq:a-derivative}, the rescaled
function
\[
y\longmapsto a_{j,\nu}\bigl(2^{-j}\nu+2^{-j}y\bigr)
\]
is supported in $B(0,C)$ and satisfies, for every $|\alpha|\leq R_0$,
\[
\left|
D_y^\alpha\left[
a_{j,\nu}\bigl(2^{-j}\nu+2^{-j}y\bigr)
\right]
\right|
=
2^{-j|\alpha|}
\left|
(D^\alpha a_{j,\nu})
\bigl(2^{-j}\nu+2^{-j}y\bigr)
\right|
\lesssim 1.
\]
It follows from the preceding support and derivative estimates that the
rescaled atoms form a bounded subset of
$C_c^{R_0}(B(0,C))$. Moreover, since
$\operatorname{supp}a_{j,\nu}\cap\Omega\neq\varnothing$ and $\Omega$
is bounded, the relevant translation parameters $2^{-j}\nu$ remain
in a fixed bounded set. The quantitative estimate
\eqref{eq:uniform-atom-variation}, with $h=2^{-j}$ and
$x_0=2^{-j}\nu$, therefore yields uniformly in $j$ and $\nu$ that
\[
\|a_{j,\nu}|_\Omega\|_{\mathcal L_1(\mathbb{D})}
\lesssim 2^{jk}.
\]
Using $\ell^p\hookrightarrow\ell^1$ for $0<p\leq1$, we obtain
\begin{equation*}
\|f_j|_\Omega\|_{\mathcal{L}_1(\mathbb{D})}
\lesssim 2^{jk}\sum_{\nu\in\mathbb{Z}^d}|\lambda_{j,\nu}|
\leq 2^{jk}\left(\sum_{\nu\in\mathbb{Z}^d}|\lambda_{j,\nu}|^p\right)^{1/p}
\lesssim 2^{j(k+d/p)}\|f_j\|_{L^p(\mathbb{R}^d)}.
\end{equation*}
Here the last estimate follows from \eqref{eq:lambda-lp}, including at
the fixed scale $j=0$.

\textbf{Step 2: Summation over dyadic scales.}
For $s\geq s_0$ and $0<q\leq1$, the elementary embedding
$\ell^q\hookrightarrow\ell^1$ yields
\begin{equation}
\begin{aligned}\label{eq:critical-sum}
\sum_{j=0}^\infty\|f_j|_\Omega\|_{\mathcal L_1(\mathbb{D})}
&\lesssim\sum_{j=0}^\infty2^{js_0}\|f_j\|_{L^p(\mathbb{R}^d)}\\
&\leq\sum_{j=0}^\infty2^{-j(s-s_0)}
\left(2^{js}\|f_j\|_{L^p(\mathbb{R}^d)}\right)
\leq\|f_e\|_{B_{p,q}^{s}(\mathbb{R}^d)}.
\end{aligned}    
\end{equation}

When $q>1$ and $s>s_0$, H\"older's inequality gives
\begin{equation}\label{eq:gjL1DBound}
    \begin{aligned}
\sum_{j=0}^\infty\|f_j|_\Omega\|_{\mathcal L_1(\mathbb{D})}
&\lesssim\sum_{j=0}^\infty
2^{-j(s-s_0)}2^{js}\|f_j\|_{L^p(\mathbb{R}^d)}\\
&\leq
\left(\sum_{j=0}^\infty2^{jsq}\|f_j\|^q_{L^p(\mathbb{R}^d)}\right)^{\frac{1}{q}}
\left(\sum_{j=0}^\infty2^{-j(s-s_0)q^\prime}\right)^{\frac{1}{q^\prime}}\\
&\lesssim\|f_e\|_{B_{p,q}^s(\mathbb{R}^d)},
\end{aligned}
\end{equation}
where $q'$ is the conjugate exponent of $q$, with the usual modification
when $q=\infty$.

Finally, we set $S_N:=\sum_{j=0}^Nf_j$. By \eqref{eq:critical-sum} when
$0<q\leq1$ and by \eqref{eq:gjL1DBound} when $q>1$, the sequence
$S_N|_\Omega$ is Cauchy in $\mathcal L_1(\mathbb{D})$. By the completeness
\textcolor{blue}{recalled in Subsection~\ref{subsec:ambient-space}}, it converges
to some $h\in\mathcal L_1(\mathbb{D})$. The continuous embedding
$\mathcal L_1(\mathbb{D})\hookrightarrow L^2(\Omega)$ implies that this
convergence also holds in $L^2(\Omega)$,
\[
 S_N|_\Omega\longrightarrow h\qquad\text{in }L^2(\Omega),
\]
thus for every $\zeta\in C_c^\infty(\Omega)$,
\[
\int_\Omega S_N(x)\zeta(x)\,\dd x
\longrightarrow
\int_\Omega h(x)\zeta(x)\,\dd x,
\]
whereas \eqref{eq:BesovSum} gives
$S_N\to f_e$ in $\mathcal S'(\mathbb{R}^d)$, and hence
\[
\int_\Omega S_N(x)\zeta(x)\,\dd x
=\langle S_N,\zeta\rangle
\longrightarrow
\langle f_e,\zeta\rangle.
\]
Comparing these two limits yields
\[
\langle f_e,\zeta\rangle
=\int_\Omega h(x)\zeta(x)\,\dd x
\qquad\text{for every }\zeta\in C_c^\infty(\Omega).
\]
Thus $f_e|_\Omega=h$ in $\mathcal D'(\Omega)$. Since
$f_e|_\Omega=f$, the function $f$ has the $L^2(\Omega)$ representative
$h$. As a consequence,
the Littlewood--Paley expansion converges in the variation space:
\begin{equation*}
f=f_e|_\Omega=\sum_{j=0}^\infty f_j|_\Omega
\qquad\text{in }\mathcal L_1(\mathbb{D}).
\end{equation*}
Combining \eqref{eq:critical-sum} when $q\leq1$ (or
\eqref{eq:gjL1DBound} when $q>1$) with the choice of $f_e$ gives
\begin{equation*}
 \|f\|_{\mathcal L_1(\mathbb{D})}
 \lesssim \|f\|_{B_{p,q}^s(\Omega)}+\varepsilon.
\end{equation*}
We let $\varepsilon\to0$ to complete the proof.
\end{proof}

\subsection{Sharpness of the forward embedding}

The theorem gives the positive embedding at and above the critical index.
It remains to determine whether this threshold can be lowered. We show that
it cannot by comparing the Besov and variation scaling of a compactly
supported bump. To prepare the dyadic estimates, for
$w\in C_c^\infty(\mathbb{R}^d)$ and $L\in\mathbb N_+$, denote the order-$L$
Taylor remainder by
\begin{align}\label{eq:TaylorRemainder}
     R_w^L(x,y)
 :=L\sum_{|\alpha|=L}\frac{(-y)^\alpha}{\alpha!}
 \int_0^1(1-r)^{L-1}
 D^\alpha w(x-r y)\,\dd r.
\end{align}
The next lemma gives the exact two-sided scaling needed for the
counterexample.

\begin{lemma}\label{lem:L1DLowerBound}
Let $0<p,q\leq\infty$ and $s>0$, with the convention $d/\infty=0$.
There exist $x_0\in\Omega$ and
$\psi\in C_c^\infty(\mathbb{R}^d)$ such that, with
\begin{equation*}
\psi_h(x):=\psi\left(\frac{x-x_0}{h}\right).
\end{equation*}
the following estimates hold for all sufficiently small $h>0$:
\begin{equation*}
\|\psi_h\|_{B_{p,q}^s(\mathbb{R}^d)}\simeq h^{d/p-s},\qquad
\|\psi_h\|_{\mathcal L_1(\mathbb{D})}\simeq h^{-k}.
\end{equation*}
\end{lemma}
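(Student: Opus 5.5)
Choose $x_0\in\Omega$ with $B(x_0,2h_0)\subset\Omega$ for some $h_0>0$, and a nonzero $\psi\in C_c^\infty(B(0,1))$. Four bounds are needed: upper and lower for each norm. The upper variation bound $\|\psi_h\|_{\mathcal L_1(\mathbb{D})}\lesssim h^{-k}$ is exactly Lemma~\ref{lem:atom-variation}. For the Besov norm we work on $\mathbb{R}^d$ with the dyadic definition. For $2^j\le h^{-1}$ we write $\Delta_j\psi_h=\varphi_j^\vee*\psi_h$ and use the vanishing moments of $\varphi^\vee$ (for $j\ge1$). Expanding $\varphi_j^\vee$ around $x$ with the Taylor remainder $R^L$ of \eqref{eq:TaylorRemainder}, or equivalently moving $L$ derivatives onto $\psi_h$, gives
\[
|\Delta_j\psi_h(x)|\lesssim (2^jh)^{L}(1+h^{-1}|x-x_0|)^{-N}.
\]
Hence $\|\Delta_j\psi_h\|_{L^p}\lesssim (2^jh)^L h^{d/p}$. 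For $2^j>h^{-1}$ we instead use smoothness of $\psi_h$ against the oscillation of $\varphi_j^\vee$, integrating by parts $M$ times on the Fourier side. This yields $\|\Delta_j\psi_h\|_{L^p}\lesssim (2^jh)^{-M}h^{d/p}$, with $M$ large. For $p<1$ we still get $L^p$ bounds, because the pointwise decay is of order $(1+h^{-1}|x-x_0|)^{-N}$ with $Np>d$. Summing $2^{js}$ times these bounds over $j$, with $L>s$ and $M>s$, is a geometric sum dominated at $2^j\approx h^{-1}$. It gives $\|\psi_h\|_{B^s_{p,q}}\lesssim h^{d/p-s}$ for every $q$; the $j=0$ term contributes only $h^{L+d/p}$. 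For the lower bound, choose $j_h$ with $2^{j_h}\approx h^{-1}$. Then $\Delta_{j_h}\psi_h(x)=h^{d}\cdots$ is, after rescaling, $(\varphi(2^{-j_h}h^{-1}\cdot)\widehat\psi)^\vee$ at unit scale. The multiplier ranges over a compact family indexed by $2^{-j_h}h^{-1}\in[1,2]$, so $\|\Delta_{j_h}\psi_h\|_{L^p}\gtrsim h^{d/p}$. This requires choosing $\psi$ with $\widehat\psi$ nonvanishing near every sphere of radius in $[1/2,4]$, e.g.\ $\widehat\psi(\xi)\neq0$ there, which holds for a generic bump such as a Gaussian-like compactly supported radial function. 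Continuity and compactness then give a uniform positive lower bound. Hence $\|\psi_h\|_{B^s_{p,q}}\ge 2^{j_hs}\|\Delta_{j_h}\psi_h\|_{L^p}\gtrsim h^{d/p-s}$.

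The main obstacle is the lower bound $\|\psi_h\|_{\mathcal L_1(\mathbb{D})}\gtrsim h^{-k}$, which we handle by duality with a $(k+1)$-th order derivative functional. Take $\theta\in C_c^\infty(B(0,1))$, and set $\theta_h(x)=h^{-d}\theta((x-x_0)/h)$ and $T(g):=\int g\,\partial_1^{k+1}\theta_h\,\dd x$, supported in $\Omega$. For each dictionary element $v=\sigma_k(\omega\cdot x-b)$ we have $\partial_1^{k+1}v=k!\,\omega_1^{k+1}\,\delta(\omega\cdot x-b)$ as a distribution, a surface measure on a hyperplane. Thus $|T(v)|=k!|\omega_1|^{k+1}\bigl|\int_{\{\omega\cdot x=b\}}\theta_h\bigr|\lesssim h^{-d}h^{d-1}=h^{-1}$, uniformly in $(\omega,b)$. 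By Lemma~\ref{lem:VariationNormEquivalence}, integrating against any representing measure gives $|T(g)|\lesssim h^{-1}\|g\|_{\mathcal L_1(\mathbb{D})}$; representations on $\Omega$ suffice because $\theta_h$ is supported inside $\Omega$. On the other hand, integrating by parts and rescaling,
\[
T(\psi_h)=(-1)^{k+1}h^{-k-1}\int\partial_1^{k+1}\psi\,\theta\,\dd y.
\]
Choosing $\theta=\partial_1^{k+1}\psi$, this integral is $\|\partial_1^{k+1}\psi\|_{L^2}^2\neq0$. Therefore $h^{-k-1}\lesssim h^{-1}\|\psi_h\|_{\mathcal L_1(\mathbb{D})}$, i.e.\ $\|\psi_h\|_{\mathcal L_1(\mathbb{D})}\gtrsim h^{-k}$. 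The delicate point is justifying the hyperplane formula uniformly, including when $\omega_1=0$, where $T(v)=0$ and the bound is trivial. It also requires passing the functional through the Bochner integral; this is legitimate since $T$ is a distributional pairing with a test function supported in $\Omega$, exactly as in Lemma~\ref{lem:VariationNormEquivalence}.
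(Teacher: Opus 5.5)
There is a genuine gap in the low-frequency part of your Besov upper bound. For $2^jh\le1$ the vanishing moments of $\varphi^\vee$ cannot produce a factor $(2^jh)^L$. Taylor-expanding $\psi_h$ against them gives $(2^jh)^{-L}$, which is the high-frequency estimate and is useless here. A low-frequency gain can only come from vanishing moments of $\psi$. Your $\psi$ is a generic bump; you even suggest a radial, Gaussian-like one, which has $\int\psi\neq0$.

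The localization $(1+h^{-1}|x-x_0|)^{-N}$ is also wrong in this range. $\Delta_j\psi_h$ has spectrum in $\{|\xi|\le2^{j+1}\}$, so it lives at scale $2^{-j}\ge h$. In fact, if $\int\psi\neq0$ then $\Delta_j\psi_h(x_0+\cdot)\approx h^d(\int\psi)\varphi_j^\vee$ for $2^jh\ll1$. Hence $\|\Delta_j\psi_h\|_{L^p}\simeq h^{d/p}(2^jh)^{d-d/p}$, and in particular $\|\Delta_0\psi_h\|_{L^p}\simeq h^d$, not $h^{L+d/p}$.

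Now take $p<1$ and $0<s<d/p-d$, e.g.\ $d=1$, $p=s=1/2$. Then $\|\psi_h\|_{B^s_{p,q}(\mathbb{R}^d)}\gtrsim h^d\gg h^{d/p-s}$, so the claimed equivalence is false for your choice of $\psi$. At $s=d/p-d$ and $q<\infty$ one also loses a factor $(\log(1/h))^{1/q}$. For $p\ge1$ your final inequality happens to hold, because $s+d-d/p>0$, but not for the reason you give.

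The repair is the one used in the paper. Take $\psi=\partial_1^L\eta$ with $L>\max\{k,s,d/p-d\}$, so that $\int y^\alpha\psi(y)\,\dd y=0$ for $|\alpha|<L$. For $2^jh<1$, Taylor-expand $\varphi_j^\vee(x-\cdot)$ around $x_0$ and let the moments of $\psi$ annihilate the polynomial part. This gives $|\Delta_j\psi_h(x)|\lesssim(2^jh)^{L+d}(1+2^j|x-x_0|)^{-N}$, hence $\|\Delta_j\psi_h\|_{L^p}\lesssim h^{d/p}(2^jh)^{L+d-d/p}$. This sums because $s+L+d-d/p>0$. Keep the vanishing moments of $\varphi^\vee$ for the range $2^jh\ge1$.

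With this $\psi$ and $d\ge2$, your proposed condition $\widehat\psi\neq0$ on an annulus fails, since $\widehat\psi$ vanishes on $\{\xi_1=0\}$. It is not needed, however. For every $\tau$, $\varphi(\cdot/\tau)\widehat\psi\not\equiv0$ because $\widehat\psi$ is real-analytic and nonzero, and that is all the compactness argument uses.

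Your variation lower bound, by contrast, is correct and takes a genuinely different route from the paper's. Pairing with $\partial_1^{k+1}\theta_h$ turns each ridge into $k!\,\omega_1^{k+1}$ times surface measure on a hyperplane. The uniform bound $h^{-1}$ then comes from the area of a hyperplane slice of $B(x_0,h)$ and needs no moment condition on $\psi$. The paper instead pairs with $\psi_h$ itself and uses the $k$-homogeneity of $\sigma_k$. It then needs the moments of $\psi$ of order $\le k$ to vanish, to annihilate the region where the ridge is a polynomial on $\supp\psi_h$. Your functional works unchanged for $\psi=\partial_1^L\eta$, since it only requires $\partial_1^{k+1}\psi\neq0$.
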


\begin{proof}
We choose $x_0\in\Omega$, $r_0>0$, and an integer $L$ such that
$B(x_0,2r_0)\subset\Omega$ and
\[
 L>\max\left\{k,s,\frac dp-d\right\}.
\]
We choose a nonzero real-valued $\eta\in C_c^\infty(B(0,r_0))$ and set
$\psi:=\partial_1^L\eta$. Integration by parts shows that
\begin{equation}\label{eq:VanishingCondition}
\int_{\mathbb{R}^d}y^\alpha\psi(y)\,\dd y=0,
\qquad |\alpha|<L.
\end{equation}
For all sufficiently small $h\in(0,1)$, we have
$\supp\psi_h\subset\Omega$.

\textbf{Step 1: Estimate of the variation norm.}

We define the linear functional
\begin{equation}\label{eq:BumpFunctional}
\Lambda_h(v):=h^{-d-k}\int_\Omega
\psi\left(\frac{x-x_0}{h}\right)v(x)\,\dd x.
\end{equation}
For $(\omega,b)\in\Sph^{d-1}\times[-c,c]$, we set
$a:=(\omega\cdot x_0-b)/h$. The change of variables $x=x_0+hy$,
together with the $k$-homogeneity of $\sigma_k$, gives
\begin{equation*}
\Lambda_h\bigl(\sigma_k(\omega\cdot{}-b)\bigr)
=\int_{\mathbb{R}^d}\psi(y)\sigma_k(\omega\cdot y+a)\,\dd y
=:T_\omega(a).
\end{equation*}
If $a\leq-r_0$, then $T_\omega(a)=0$. If $a\geq r_0$, the function
$\sigma_k(a+\omega\cdot y)$ is a polynomial of degree at most $k$ on
$\supp\psi$, and hence \eqref{eq:VanishingCondition} also gives
$T_\omega(a)=0$. For $|a|<r_0$,
\[
|T_\omega(a)|\leq (2r_0)^k\|\psi\|_{L^1(\mathbb{R}^d)},
\]
uniformly over the dictionary. For each fixed $h$, the functional
$\Lambda_h$ is continuous on $L^2(\Omega)$. Its bound on the atoms
therefore extends to their closed absolutely convex hull, and homogeneity
of the gauge gives
\begin{equation}\label{eq:LambdahBound}
|\Lambda_h(v)|\lesssim\|v\|_{\mathcal L_1(\mathbb{D})}.
\end{equation}
This holds for every $v\in\mathcal L_1(\mathbb{D})$. Taking $v=\psi_h$ in
\eqref{eq:BumpFunctional} gives
\[
\Lambda_h(\psi_h)=h^{-k}\int_{\mathbb{R}^d}\psi(y)^2\,\dd y\gtrsim h^{-k}.
\]
Combining this estimate with \eqref{eq:LambdahBound} gives the lower
bound, while Lemma~\ref{lem:atom-variation} gives the matching upper
bound. Thus the second estimate of the lemma is proved.

\textbf{Step 2: Estimate of the Besov norm.}

We prove the first estimate of the lemma by a dyadic argument. We set
$m:=\left\lceil\log_2(1/h)\right\rceil$ and split the estimate into the
ranges $1\leq j<m$ and $j\geq m$. The fixed block
$j=0$ is treated at the end. For $j\geq1$, Fourier scaling gives
\begin{equation}\label{eq:bump-rescaled-block}
 \Delta_j\psi_h(x_0+hx)=(2^jh)^d\int_{\mathbb{R}^d}
 \varphi^\vee\bigl(2^jh(x-y)\bigr)\psi(y)\,\dd y.
\end{equation}
We first suppose that $1\leq j<m$. Taylor's formula for $\varphi^\vee$ gives
\[
 \varphi^\vee\bigl(2^jh(x-y)\bigr)
 =\sum_{|\alpha|<L}
 \frac{(-2^jhy)^\alpha}{\alpha!}
 D^\alpha\varphi^\vee(2^jhx)
 +R_{\varphi^\vee}^L(2^jhx,2^jhy).
\]
Since $\varphi^\vee\in\mathcal S(\mathbb{R}^d)$,
for every $M>0$ \eqref{eq:TaylorRemainder} gives
\[
 \left|R_{\varphi^\vee}^L(2^jhx,2^jhy)\right|
 \lesssim(2^jh)^L|y|^L
 (1+2^jh|x|)^{-M}.
\]
The moment condition \eqref{eq:VanishingCondition} annihilates the Taylor
polynomial in \eqref{eq:bump-rescaled-block}. Hence
\begin{align*}
 |\Delta_j\psi_h(x_0+hx)|
 &=(2^jh)^d\left|\int_{\mathbb{R}^d}
 R_{\varphi^\vee}^L(2^jhx,2^jhy)\psi(y)\,\dd y\right|\\
 &\lesssim(2^jh)^{L+d}(1+2^jh|x|)^{-M}.
\end{align*}
Taking the $L^p$ quasi-norm with $M>d/p$ yields
\begin{equation}\label{eq:bump-low-frequency}
h^{-d/p}\|\Delta_j\psi_h\|_{L^p(\mathbb{R}^d)}
\lesssim(2^jh)^{L+d-d/p}.
\end{equation}
For $j\geq m$, changing variables $z=2^jh(x-y)$ in
\eqref{eq:bump-rescaled-block} gives
\[
 \Delta_j\psi_h(x_0+hx)
 =\int_{\mathbb{R}^d}\varphi^\vee(z)
 \psi\left(x-\frac{z}{2^jh}\right)\,\dd z.
\]
Taylor's formula for $\psi$ at $x$ of order $L$ gives
\begin{align*}
 \psi\left(x-\frac{z}{2^jh}\right)
 =\sum_{|\alpha|<L}\frac{(-z)^\alpha}
 {\alpha!(2^jh)^{|\alpha|}}D^\alpha\psi(x)
 +R_\psi^L\left(x,\frac{z}{2^jh}\right).
\end{align*}
The vanishing moments of $\varphi^\vee$ annihilate the Taylor polynomial,
thus by \eqref{eq:TaylorRemainder} we have
\begin{equation}\label{eq:TaylorRemainderKappa}
    \begin{aligned}
 |\Delta_j\psi_h(x_0+hx)|
 &\leq\int_{\mathbb{R}^d}|\varphi^\vee(z)|
 \left|R_\psi^L\left(x,\frac{z}{2^jh}\right)\right|\,\dd z\\
 &\lesssim(2^jh)^{-L}
 \int_{\mathbb{R}^d}|z|^L|\varphi^\vee(z)|\,\dd z.
\end{aligned}
\end{equation}

We set $\lambda:=2^jh\geq1$. If $|x|\leq2r_0$, then
$(1+|x|)^{-M}\simeq1$. If $|x|>2r_0$ and the integrand in
\eqref{eq:TaylorRemainderKappa} is nonzero, then for some
$r\in(0,1]$ we have
\[
 \left|x-\frac{r z}{2^jh}\right|\leq r_0,
 \qquad
 |z|\geq\lambda(|x|-r_0)\geq\lambda|x|/2.
\]
Consequently, Schwartz decay gives
\begin{align*}
 |\Delta_j\psi_h(x_0+hx)|
 &\lesssim\lambda^{-L}
 \int_{|z|\geq\lambda|x|/2}|z|^L|\varphi^\vee(z)|\,\dd z
 \\
 &\lesssim_M\lambda^{-L}(1+\lambda|x|)^{-M}
 \lesssim_M\lambda^{-L}(1+|x|)^{-M}.
\end{align*}
Combining this with \eqref{eq:TaylorRemainderKappa} for $|x|\leq2r_0$
gives, for every $M>0$,
\begin{align}\label{eq:Deltajkappahbound}
 |\Delta_j\psi_h(x_0+hx)|
 \lesssim(2^jh)^{-L}(1+|x|)^{-M}.
\end{align}
After choosing $M>d/p$, taking the $L^p$ quasi-norm in \eqref{eq:Deltajkappahbound} yields
\begin{equation*}
\|\Delta_j\psi_h\|_{L^p(\mathbb{R}^d)}
\lesssim h^{d/p}(2^jh)^{-L}.
\end{equation*}
For the fixed block, a direct change of variables yields
\[
 \Delta_0\psi_h(x_0+hx)
 =h^d\int_{\mathbb{R}^d}\varphi_0^\vee(h(x-y))\psi(y)\,\dd y.
\]
Taylor expansion of $\varphi_0^\vee(h(x-y))$ in $y$, together with
\eqref{eq:VanishingCondition}, yields
\[
 |\Delta_0\psi_h(x_0+hx)|
 \lesssim_M h^{d+L}(1+h|x|)^{-M},
\]
for every $M>0$. Choosing $M>d/p$ when $p<\infty$ and changing variables
gives $\|\Delta_0\psi_h\|_{L^p(\mathbb{R}^d)}\lesssim h^{d+L}$. For $p=\infty$, the
same estimate follows by taking the supremum. Thus the fixed block also
satisfies \eqref{eq:bump-low-frequency}. Combining the low- and
high-frequency estimates gives
\[
 2^{js}\|\Delta_j\psi_h\|_{L^p(\mathbb{R}^d)}
 \lesssim h^{d/p-s}\cdot
 \begin{cases}
 (2^jh)^{s+L+d-d/p},&j<m,\\
 (2^jh)^{s-L},&j\geq m.
 \end{cases}
\]
For $q<\infty$, the two resulting geometric series converge because
$s+L+d-d/p>0$ and $s-L<0$. For $q=\infty$, the corresponding supremum
is finite. Therefore
\begin{equation}\label{eq:bump-besov-upper}
\|\psi_h\|_{B_{p,q}^s(\mathbb{R}^d)}\lesssim h^{d/p-s}.
\end{equation}

For the lower bound, we set $j_h:=\lceil\log_2(1/h)\rceil$ and
$\tau:=2^{j_h}h\in[1,2]$. By \eqref{eq:bump-rescaled-block},
\[
 \Delta_{j_h}\psi_h(x_0+hx)=T_\tau\psi(x),
 \qquad
 \widehat{T_\tau\psi}(\xi)=\varphi(\xi/\tau)\widehat\psi(\xi).
\]
The function $T_\tau\psi$ is nonzero for every $\tau\in[1,2]$.
Indeed, otherwise the entire function $\widehat\psi$ would vanish on a
nonempty open set on which $\varphi(\cdot/\tau)$ is nonzero, and hence
would vanish identically. Moreover, $\tau\mapsto T_\tau\psi$ is
continuous from $[1,2]$ into $\mathcal S(\mathbb{R}^d)$. Compactness
therefore gives
\[
 \inf_{1\leq\tau\leq2}\|T_\tau\psi\|_{L^p(\mathbb{R}^d)}>0,
\]
also when $p=\infty$. Scaling and the single block $j_h$ now give
\[
\|\psi_h\|_{B_{p,q}^s(\mathbb{R}^d)}
\gtrsim 2^{j_hs}\|\Delta_{j_h}\psi_h\|_{L^p}
\gtrsim h^{d/p-s}.
\]
Combining this estimate with \eqref{eq:bump-besov-upper} proves the first
estimate of the lemma and completes the proof.
\end{proof}

As an immediate consequence, let $0<s<s_0$ and define
\begin{equation*}
f_h:=h^{s-d/p}\psi_h.
\end{equation*}
Then Lemma~\ref{lem:L1DLowerBound} gives
\[
\|f_h\|_{B_{p,q}^s(\Omega)}
\leq\|f_h\|_{B_{p,q}^s(\mathbb{R}^d)}\lesssim1,
\qquad
\|f_h\|_{\mathcal L_1(\mathbb{D})}\gtrsim h^{s-s_0}\longrightarrow\infty
\quad\text{as }h\to0.
\]
Consequently,
$B_{p,q}^s(\Omega)\not\hookrightarrow\mathcal L_1(\mathbb{D})$ whenever
$0<s<s_0$.

\section{Converse embedding and sharpness}

We now reverse the direction of the embedding. The argument first converts
the variation representation into compactly supported extensions whose
derivatives of order $k+1$ are finite measures. A vector-valued
Calder\'on--Zygmund estimate then yields the positive
$B_{p,2}^{k+1}$ inclusion. We subsequently construct two families of ridge
functions: separated ridges rule out larger smoothness, while lacunary
ridges show that the fine index $2$ cannot be decreased.

\subsection{Proof of Theorem~\ref{thm:variation-to-besov}}

We begin with the positive inclusion. The measure representation from
Lemma~\ref{lem:VariationNormEquivalence} provides the derivative bounds,
and the Littlewood--Paley multiplier decomposition converts those bounds
into the required square-function estimate.

\begin{proof}
We set $m:=k+1$ and choose $\chi\in C_c^\infty(\mathbb{R}^d)$ with
$\chi=1$ on a neighborhood of $\overline\Omega$. By
Lemma~\ref{lem:VariationNormEquivalence} and the compact parameterization
of $\mathbb{D}$, for every $\varepsilon>0$ there is a finite signed Borel
measure $\mu$ on $\Sph^{d-1}\times[-c,c]$ such that, almost everywhere
on $\Omega$,
\[
 f(x)=\int_{\Sph^{d-1}\times[-c,c]}
 \sigma_k(\omega\cdot x-b)\,\dd\mu(\omega,b),
 \qquad
 \|\mu\|_{\mathrm{TV}}\leq\|f\|_{\mathcal L_1(\mathbb{D})}+\varepsilon.
\]
To justify this parameterized representation, we approximate $f$ in
$L^2(\Omega)$ by finite atomic sums with uniformly bounded coefficient
sums, viewing their coefficients as measures on the compact parameter
space, and take a weak-* convergent subsequence. Continuity of the
parameter map into $L^2(\Omega)$ identifies the weak-* limit with $f$,
and lower semicontinuity gives the displayed total-variation bound.
We define the cutoff atoms and the extension by
\[
 \chi_{\omega,b}(x):=\chi(x)\sigma_k(\omega\cdot x-b),
 \qquad
 f_e(x):=\int_{\Sph^{d-1}\times[-c,c]}
 \chi_{\omega,b}(x)\,\dd\mu(\omega,b).
\]
It follows from $\chi=1$ on $\Omega$ that $f_e=f$ almost everywhere
there, and $\supp f_e\subset\supp\chi$. Moreover, the extension satisfies
\begin{equation}\label{eq:variation-extension-size}
 |f_e(x)|\lesssim\mathbf{1}_{\supp\chi}(x)\|\mu\|_{\mathrm{TV}},
 \qquad
 \|f_e\|_{L^r(\mathbb{R}^d)}\lesssim\|\mu\|_{\mathrm{TV}},
 \quad0<r\leq\infty.
\end{equation}
For $|\alpha|=k+1$, we apply the distributional Leibniz rule to
$D^\alpha\chi_{\omega,b}$. The singular contribution is the measure
\[
 k!\,\chi(x)\omega^\alpha
 \delta_{\{\omega\cdot x=b\}},
\]
because $D^\alpha\sigma_k(\omega\cdot x-b)
=k!\omega^\alpha\delta_{\{\omega\cdot x=b\}}$ when $|\alpha|=k+1$.
Every other Leibniz term contains a derivative of $\chi$ and a derivative
of the ridge of order at most $k$, and is therefore a compactly supported
$L^1$ function. The $L^1$ norms of these terms and the total masses of
the hyperplane measures restricted to $\supp\chi$ are bounded uniformly
in $(\omega,b)$. Integrating against $\mu$ therefore gives
\begin{equation}\label{eq:variation-derivative-measures}
 \sum_{|\alpha|=m}\|D^\alpha f_e\|_{\mathrm{TV}}
 \lesssim\|\mu\|_{\mathrm{TV}}.
\end{equation}

We recall the annular multiplier $\varphi$ from the Littlewood--Paley
resolution in Section~2. There exist functions
$\rho_\alpha\in C_c^\infty(\mathbb{R}^d\setminus\{0\})$ such that
\begin{equation}\label{eq:multiplier-derivative-decomposition}
 \varphi(\xi)=\sum_{|\alpha|=k+1}\rho_\alpha(\xi)(\sqrt{-1}\xi)^\alpha.
\end{equation}
For example, since $\varphi$ vanishes near the origin, we may set
\[
 \rho_\alpha(\xi)
 :=\frac{\varphi(\xi)(-\sqrt{-1}\xi)^\alpha}
 {\sum_{|\beta|=k+1}\xi^{2\beta}},
 \qquad \xi\ne0,
\]
and set $\rho_\alpha(0)=0$. This is smooth because $\varphi$ vanishes on
a neighborhood of the origin.
For $j\geq1$, we set
$K_{\alpha,j}(x):=2^{jd}\rho^\vee_\alpha(2^jx)$. Then
\eqref{eq:multiplier-derivative-decomposition} gives
\begin{equation}\label{eq:variation-block-derivative-decomposition}
 2^{j(k+1)}\Delta_jf_e
 =\sum_{|\alpha|=k+1}K_{\alpha,j}*D^\alpha f_e,
 \qquad j\geq1.
\end{equation}

For a finite signed Radon measure $\nu$, we define
\[
 S_\alpha\nu(x)
 :=\left(\sum_{j\geq1}|(K_{\alpha,j}*\nu)(x)|^2\right)^{1/2}.
\]
We record the endpoint estimate used below. Plancherel's theorem and the
finite overlap of the supports of $\rho_\alpha(2^{-j}\cdot)$ give
\[
 \left\|\bigl(K_{\alpha,j}*u\bigr)_{j\geq1}
 \right\|_{L^2(\mathbb{R}^d;\ell^2)}
 \lesssim\|u\|_{L^2(\mathbb{R}^d)}.
\]
Moreover, the $\ell^2$-valued kernel
$\bm K_\alpha(x):=(K_{\alpha,j}(x))_{j\geq1}$ satisfies the standard
estimates
\[
 \|\bm K_\alpha(x)\|_{\ell^2}\lesssim |x|^{-d},
 \qquad
 \|\bm K_\alpha(x-y)-\bm K_\alpha(x)\|_{\ell^2}
 \lesssim\frac{|y|}{|x|^{d+1}}
\]
whenever $|x|>2|y|$. These estimates follow by splitting the dyadic sum
at the scale $2^j|x|\simeq1$ and using the Schwartz bounds for
$\rho_\alpha^\vee$ and its gradient. The Hilbert-valued
Calder\'on--Zygmund theorem
\cite{BenedekCalderonPanzone1962,RubioRuizTorrea1986,Stein1970} therefore
gives weak type $(1,1)$ for $L^1$ inputs.

The same weak-type estimate holds for finite measures. Indeed, we mollify
$\nu$ to obtain $\nu_\delta$, with
$\|\nu_\delta\|_{L^1}\leq\|\nu\|_{\mathrm{TV}}$. For each $j$,
$K_{\alpha,j}*\nu_\delta$ converges pointwise to
$K_{\alpha,j}*\nu$ as $\delta\to0$. Applying Fatou's lemma to finite
partial square functions and then letting the number of scales tend to
infinity gives
\begin{equation}\label{eq:measure-square-weak-one}
 \bigl|\{x\in\mathbb{R}^d:S_\alpha\nu(x)>\lambda\}\bigr|
 \lesssim\lambda^{-1}\|\nu\|_{\mathrm{TV}},
 \qquad\lambda>0.
\end{equation}
For the remainder of this estimate, assume that
$\supp\nu\subset\supp\chi$.
We choose a bounded open set $\mathcal O$ containing the closed unit
neighborhood of $\supp\chi$ and set $M:=\|\nu\|_{\mathrm{TV}}$.
By \eqref{eq:measure-square-weak-one},
\[
 |\{x\in\mathcal O:S_\alpha\nu(x)>\lambda\}|
 \lesssim\min\{|\mathcal O|,M/\lambda\}.
\]
Splitting the distribution-function integral at
$\lambda=M/|\mathcal O|$ gives, for $0<p<1$,
\begin{align*}
 \|S_\alpha\nu\|_{L^p(\mathcal O)}^p
 &=p\int_0^\infty\lambda^{p-1}
 \bigl|\{x\in\mathcal O:S_\alpha\nu(x)>\lambda\}\bigr|\,\dd\lambda\\
 &\lesssim |\mathcal O|^{1-p}M^p.
\end{align*}
Outside $\mathcal O$, Schwartz decay gives, for every $A>d/p$,
\[
 |(K_{\alpha,j}*\nu)(x)|
 \lesssim2^{-j(A-d)}(1+|x|)^{-A}M,
 \qquad x\in\mathcal O^c,\quad j\geq1.
\]
Taking the $\ell^2$ norm in $j$ and then the $L^p$ quasi-norm gives
\begin{equation*}
 \|S_\alpha\nu\|_{L^p(\mathcal O^c)}
 \lesssim M.
\end{equation*}
Consequently, every finite signed measure supported in $\supp\chi$
satisfies
\begin{equation}\label{eq:measure-square-p}
 \|S_\alpha\nu\|_{L^p(\mathbb{R}^d)}
 \lesssim\|\nu\|_{\mathrm{TV}},
 \qquad0<p<1.
\end{equation}

Combining \eqref{eq:variation-derivative-measures},
\eqref{eq:variation-block-derivative-decomposition}, and
\eqref{eq:measure-square-p}, and using $p$-subadditivity over the finitely
many multi-indices (all finite-dimensional $\ell^r$ quasi-norms are
equivalent), gives
\begin{equation*}
 \left\|\left(\sum_{j\geq1}
 |2^{jm}\Delta_jf_e|^2\right)^{1/2}\right\|_{L^p(\mathbb{R}^d)}
 \lesssim\|\mu\|_{\mathrm{TV}}.
\end{equation*}
For completeness, we set
$a_j(x):=|2^{jm}\Delta_jf_e(x)|^p$ and $r:=2/p>1$.
Minkowski's inequality in $\ell^r$ gives
\[
 \left\|\left(\int_{\mathbb{R}^d}a_j(x)\,\dd x\right)_{j\geq1}
 \right\|_{\ell^r}
 \leq\int_{\mathbb{R}^d}\|(a_j(x))_{j\geq1}\|_{\ell^r}\,\dd x.
\]
Taking the $p$-th root yields
\begin{align*}
 \left(\sum_{j=1}^\infty
 \bigl(2^{j(k+1)}\|\Delta_jf_e\|_{L^p(\mathbb{R}^d)}\bigr)^2\right)^{1/2}
 &\leq
 \left\|\left(\sum_{j=1}^\infty
 |2^{j(k+1)}\Delta_jf_e|^2\right)^{1/2}\right\|_{L^p(\mathbb{R}^d)}\\
 &\lesssim\|\mu\|_{\mathrm{TV}}.
\end{align*}
Finally, \eqref{eq:variation-extension-size}, the compact support of
$f_e$, and the Schwartz decay of $\varphi_0^\vee$ imply,
for every $B>0$,
\[
 |\Delta_0f_e(x)|
 \lesssim\|\mu\|_{\mathrm{TV}}(1+|x|)^{-B}.
\]
Choosing $Bp>d$ therefore gives
\[
 \|\Delta_0f_e\|_{L^p(\mathbb{R}^d)}\lesssim\|\mu\|_{\mathrm{TV}}.
\]
It follows that
\[
 \|f\|_{B_{p,2}^{k+1}(\Omega)}
 \leq\|f_e\|_{B_{p,2}^{k+1}(\mathbb{R}^d)}
 \lesssim\|f\|_{\mathcal L_1(\mathbb{D})}+\varepsilon.
\]
We let $\varepsilon\to0$ to prove the asserted embedding.

\subsection{Sharpness of the smoothness index}
The preceding argument proves the positive part of
Theorem~\ref{thm:variation-to-besov}. We first show that the exponent
$k+1$ cannot be increased by giving an explicit lower-bound calculation
for a single ridge. Since admissible
Littlewood--Paley resolutions give equivalent quasi-norms, we use the
radial resolution fixed in Section~2, for which
\[
 \varphi_j(\xi)=\varphi(2^{-j}\xi),\qquad j\geq1,
\]
where $\varphi$ is supported in an annulus and is nonzero on the first
coordinate axis. Writing
$x=(t,x')\in\mathbb{R}\times\mathbb{R}^{d-1}$, define the one-dimensional
kernel and profile by
\begin{equation*}
 Q(t):=\int_{\mathbb{R}^{d-1}}\varphi^\vee(t,x')\,\dd x',
 \qquad
 G_k(t):=\int_{\mathbb{R}}Q(v)\sigma_k(t-v)\,\dd v.
\end{equation*}
The Fourier transform of this profile is
\begin{equation*}
 \widehat{G_k}(\xi)
 =c_k\varphi(\xi,0)(\sqrt{-1}\xi)^{-k-1},
 \qquad \xi\neq0,
\end{equation*}
so $G_k\in\mathcal S(\mathbb{R})$ and $G_k\not\equiv0$. After rotating
coordinates so that $\omega=e_1$, direct rescaling of the convolution
gives the exact identity
\begin{equation*}
 \Delta_j[\sigma_k(x_1-b)](t,x')
 =2^{-jk}G_k\bigl(2^j(t-b)\bigr),
 \qquad j\geq1.
\end{equation*}

We choose a bounded measurable set $E\subset\mathbb{R}$ of positive measure
and $c_0>0$ such that $|G_k(t)|\geq2c_0$ on $E$, and fix $R>0$ such that
$E\subset[-R,R]$.
We choose open intervals $I_0\Subset I\Subset(-c,c)$, a point
$x'_0\in\mathbb{R}^{d-1}$, and $r_0>0$. We set
\[
 U:=B_{\mathbb{R}^{d-1}}(x'_0,2r_0),
 \qquad U_0:=B_{\mathbb{R}^{d-1}}(x'_0,r_0),
\]
and choose these objects so that $I\times U\Subset\Omega$. We choose
$\rho\in C_c^\infty(\Omega)$ with $\rho=1$ on a neighborhood of
$\overline{I\times U}$. When $d=1$, the transverse variables are
omitted and the measure of $U_0$ below is interpreted as one. For
$b\in I_0$ and $(t,x')\in\mathbb{R}^d$, let
\begin{align*}
 \mathcal E_{j,b}(t,x')
 :=2^{-jk}G_k\bigl(2^j(t-b)\bigr)-\Delta_j[\rho\sigma_k(x_1-b)](t,x').
\end{align*}
Then, for every $B>0$, we have
\begin{align}
 \sup_{\substack{b,t\in I_0,\ |u|\leq R\\x'\in U_0}}
 \bigl|\mathcal E_{j,b}(t+2^{-j}u,x')\bigr|
 &\lesssim2^{-jB}.
 \label{eq:ridge-profile-remainder}
\end{align}
Indeed, the points $(t+2^{-j}u,x')$ remain in a fixed compact subset of
$I\times U$ for all sufficiently large $j$, uniformly in
$t\in I_0$, $|u|\leq R$, and $x'\in U_0$. The distance from this
compact set to the support of $1-\rho$ is positive, and
$(1-\rho(x))\sigma_k(x_1-b)$ vanishes on a fixed neighborhood of those
points and grows at most polynomially of degree $k$, uniformly for
$b\in I_0$. Applying the convolution formula for $\Delta_j$, changing
variables $z=2^j(x-y)$, and using a Schwartz bound of order greater than
$B+d+k$ proves \eqref{eq:ridge-profile-remainder}.
Taking $B>k$ in
\eqref{eq:ridge-profile-remainder}, we obtain for all sufficiently large
$j$,
\[
 |\Delta_j[\rho\sigma_k(x_1-b)](b+2^{-j}u,x')|
 \gtrsim 2^{-jk},\qquad (u,x')\in E\times U_0,
\]
uniformly for $b\in I_0$. For $s>0$ and $0<q\leq\infty$, we use the
localization inequality
\begin{equation}\label{eq:besov-interior-localization}
 \|\rho W\|_{B^s_{p,q}(\mathbb{R}^d)}
 \lesssim\|W\|_{B^s_{p,q}(\Omega)}.
\end{equation}
Indeed, we define $\rho W$ to be zero outside $\Omega$. If $W_e$ is any
extension of $W$, then $\rho W=\rho W_e$ on $\mathbb{R}^d$, and
multiplication by a smooth compactly supported function is bounded on
$B^s_{p,q}(\mathbb{R}^d)$ \cite{Triebel,FrazierJawerth1990}. Taking
the infimum over $W_e$ proves \eqref{eq:besov-interior-localization}.
For every $N\geq2$, we choose equally spaced points
$t_{1,N},\ldots,t_{N,N}$ in $I_0$ and set
\begin{equation*}
 V_N(t,x'):=\frac1N\sum_{i=1}^N\sigma_k(t-t_{i,N}).
\end{equation*}
It follows directly from the atomic representation that
$\|V_N\|_{\mathcal L_1(\mathbb{D})}\leq1$. We write
$d_N:=t_{i+1,N}-t_{i,N}\simeq N^{-1}$. We fix $M>1$ and choose
$\delta>0$, independently of $N$, sufficiently small for the estimates
below. Choose $j_N$ such that
\begin{equation}\label{eq:many-ridges-scale}
 \delta d_N\leq2^{-j_N}<2\delta d_N.
\end{equation}
This choice of $\delta$ makes the slabs
\begin{equation*}
 \mathcal C_{i,N}:=
 \{(t_{i,N}+2^{-j_N}u,x'):u\in E,\ x'\in U_0\},
 \qquad1\leq i\leq N,
\end{equation*}
pairwise disjoint and ensures that, for $u\in E$ and $r\ne i$,
\[
 |u+2^{j_N}(t_{i,N}-t_{r,N})|
 \gtrsim\delta^{-1}|i-r|.
\]
On $\mathcal C_{i,N}$, equation~\eqref{eq:ridge-profile-remainder} gives
\begin{equation}\label{eq:many-ridges-profile}
\begin{aligned}
 \Delta_{j_N}(\rho V_N)
 (t_{i,N}+2^{-j_N}u,x')
 ={}&\frac{2^{-j_Nk}}{N}
 \sum_{r=1}^NG_k\bigl(u+2^{j_N}(t_{i,N}-t_{r,N})\bigr)\\
 &-\mathcal E_{i,N}(u,x').
\end{aligned}    
\end{equation}

Here the averaged cutoff error is
\[
 \mathcal E_{i,N}(u,x')
 :=\frac1N\sum_{r=1}^N
 \mathcal E_{j_N,t_{r,N}}
 \bigl(t_{i,N}+2^{-j_N}u,x'\bigr).
\]
Since $G_k\in\mathcal S(\mathbb{R})$, for every $M>1$,
\begin{align}
 \sup_{u\in E}\sum_{r\neq i}
 |G_k(u+2^{j_N}(t_{i,N}-t_{r,N}))|
 &\lesssim\sum_{n\geq1}(1+\delta^{-1}n)^{-M}
 \lesssim\delta^M.
 \label{eq:many-ridges-tail}
\end{align}
By the initial choice of $\delta$, the last quantity is at most
$c_0/2$. Fix also $B>k+1$. Since
$N\simeq2^{j_N}$ by \eqref{eq:many-ridges-scale}, the averaged cutoff
remainder satisfies
\[
 |\mathcal E_{i,N}(u,x')|
 \lesssim2^{-j_NB},\qquad
 |\mathcal E_{i,N}(u,x')|
 \leq\frac{c_0}{2N}2^{-j_Nk},
\]
for all sufficiently large $N$ and uniformly for $(u,x')\in E\times U_0$.
Combining \eqref{eq:many-ridges-profile} and
\eqref{eq:many-ridges-tail} gives
\begin{equation*}
 |\Delta_{j_N}(\rho V_N)(x)|
 \gtrsim N^{-1}2^{-j_Nk},\qquad x\in\mathcal C_{i,N}.
\end{equation*}
Since $|\mathcal C_{i,N}|=2^{-j_N}|E||U_0|$, summing over the disjoint
slabs gives
\begin{align*}
 \|\Delta_{j_N}(\rho V_N)\|_{L^p(\mathbb{R}^d)}^p
 &\gtrsim N^{1-p}2^{-j_N(kp+1)},
 \\
 \|\Delta_{j_N}(\rho V_N)\|_{L^p(\mathbb{R}^d)}
 &\gtrsim N^{1/p-1}2^{-j_N(k+1/p)}
 \gtrsim2^{-j_N(k+1)}.
\end{align*}
The last inequality follows from \eqref{eq:many-ridges-scale} and
$d_N\simeq N^{-1}$. Hence, by \eqref{eq:besov-interior-localization},
for every $s>k+1$ and every $0<q\leq\infty$,
\begin{align*}
 \|V_N\|_{B_{p,q}^s(\Omega)}
 &\gtrsim\|\rho V_N\|_{B_{p,q}^s(\mathbb{R}^d)}
 \gtrsim2^{j_Ns}
 \|\Delta_{j_N}(\rho V_N)\|_{L^p}\\
 &\gtrsim2^{j_N(s-k-1)}\longrightarrow\infty.
\end{align*}
Since $\|V_N\|_{\mathcal L_1(\mathbb{D})}\leq1$, the exponent $k+1$ is the
largest possible smoothness exponent for $0<p<1$.

\subsection{Sharpness of the fine index}
The separated-ridge construction rules out every smoothness exponent above
$k+1$, but it does not distinguish the fine index at the critical
smoothness. We therefore use a lacunary superposition to show that the fine
index $2$ cannot be decreased. Retain the
radial resolution and the sets $I_0\Subset I$ and $U_0\Subset U$ chosen
above, as well as $m=k+1$. Choose a nonzero real-valued function
$\theta\in C_c^\infty(\mathbb{R})$ with $\supp\theta\subset I_0$, a compact
interval $K_0\Subset I$ whose interior contains $\supp\theta$, and a
number $\tau_0>0$ such that $\varphi(\tau_0,0)\neq0$. We define
\[
 a_m(\eta):=k!\varphi(\eta,0)(\sqrt{-1}\eta)^{-m},
 \qquad \eta\neq0,
\]
and extend it by zero near the origin. Then
$a_m\in C_c^\infty(\mathbb{R}\setminus\{0\})$ and
$a_m(\tau_0)\neq0$. We choose an integer $L$ so large that, whenever
$r\ne\ell$, the points $\pm\tau_0 2^{L(r-\ell)}$ lie outside
$\supp a_m$ with a fixed positive relative separation.

Given $N\geq1$, we choose an integer $J_*=J_*(N)$, to be fixed below, and
set
\[
 J_\ell:=J_*+L\ell,
 \qquad \lambda_\ell:=\tau_0 2^{J_\ell},
 \qquad1\leq\ell\leq N.
\]
We define the lacunary density by
\begin{equation*}
 g_N(b):=\frac1{\sqrt N}\theta(b)
 \sum_{\ell=1}^N\cos(\lambda_\ell b).
\end{equation*}
Expanding the square, the off-diagonal integrals are controlled by values
of $\widehat{\theta^2}$ at the frequencies
$\pm\lambda_\ell\pm\lambda_r$. Lacunarity and the Schwartz decay of
$\widehat{\theta^2}$ show more precisely that, for every $A>0$,
\[
 \sum_{1\leq r<\ell\leq N}
 \big|\widehat{\theta^2}(\lambda_\ell\pm\lambda_r)\big|
 \lesssim_A\sum_{\ell\geq1}\ell
 (1+\tau_02^{J_*+L\ell})^{-A}.
\]
Thus the sum of the off-diagonal contributions is bounded uniformly in
$N$ and $J_*\geq J_0$, whereas the sum of the $N$ diagonal contributions
is $O(N)$. Accounting for the factor $N^{-1}$ from the normalization in
the definition of $g_N$, we obtain
\[
 \|g_N\|_{L^2(\mathbb{R})}^2
 \lesssim1.
\]
The Cauchy--Schwarz inequality therefore gives
\begin{equation}\label{eq:lacunary-density-L1}
 \|g_N\|_{L^1(\mathbb{R})}
 \leq |I_0|^{1/2}\|g_N\|_{L^2(\mathbb{R})}\lesssim1.
\end{equation}
We define the corresponding ridge function by
\begin{equation*}
 H_N(t,x'):=\int_{I_0}\sigma_k(t-b)g_N(b)\,\dd b.
\end{equation*}
This is an admissible ridge integral with direction $e_1$, so
\eqref{eq:lacunary-density-L1} implies
\begin{equation}\label{eq:lacunary-variation-bound}
 \|H_N\|_{\mathcal L_1(\mathbb{D})}\lesssim1.
\end{equation}
Differentiation in the distributional sense gives
\begin{equation*}
 \partial_t^mH_N(t,x')=k!g_N(t).
\end{equation*}

For an integer $J$, let $T_J$ denote the one-dimensional multiplier
\[
 T_Ju:=\bigl(a_m(2^{-J}\cdot)\widehat u\bigr)^\vee.
\]
Since annular multipliers annihilate the distribution supported at the
origin that may arise when solving
$(\sqrt{-1}\xi)^m\widehat H_N=k!\widehat g_N$, we have the exact identity
\[
 2^{Jm}\Delta_JH_N(t,x')=T_Jg_N(t),
 \qquad J\geq1.
\]

We next isolate the resonant frequency. Modulation and a change of
variables in Fourier space give, as $J\to\infty$,
\begin{align*}
 T_J(\theta e^{\sqrt{-1}\tau_02^J\cdot})(t)
 &=e^{\sqrt{-1}\tau_02^Jt}
 \bigl(a_m(\tau_0)\theta(t)+o_{\mathcal S}(1)\bigr),\\
 T_J(\theta e^{-\sqrt{-1}\tau_02^J\cdot})(t)
 &=e^{-\sqrt{-1}\tau_02^Jt}
 \bigl(a_m(-\tau_0)\theta(t)+o_{\mathcal S}(1)\bigr).
\end{align*}
Since $a_m(-\tau_0)=\overline{a_m(\tau_0)}$, periodic averaging of
$|\cos|^p$ on the set where $\theta$ is nonzero gives a constant $c_1>0$
such that, for all sufficiently large $J$,
\begin{equation}\label{eq:lacunary-resonant-lower}
 \|T_J(\theta\cos(\tau_02^J\cdot))\|_{L^p(K_0)}\geq c_1.
\end{equation}

The other frequencies are uniformly negligible. Indeed, Fourier
separation and the Schwartz decay of $\widehat\theta$ imply that, for
every $M>0$,
\[
 \|T_{J_\ell}(\theta e^{\pm\sqrt{-1}\lambda_r\cdot})\|_{L^p(K_0)}
 \lesssim
 \begin{cases}
  2^{-MJ_\ell},&r<\ell,\\
  2^{-MJ_r},&r>\ell.
 \end{cases}
\]
To see this, on the support of
$a_m(2^{-J_\ell}\xi)$ the shifted variable
$\xi\mp\lambda_r$ has size $\gtrsim2^{J_\ell}$ when $r<\ell$ and
$\gtrsim2^{J_r}$ when $r>\ell$. The domain of integration in the Fourier
integral has length $O(2^{J_\ell})$. Hence Schwartz decay of order
$R>M+1$ gives the stated $L^\infty$ bounds (for $r>\ell$, use
$2^{J_\ell}(1+2^{J_r})^{-R}\lesssim2^{-MJ_r}$). The displayed
$L^p$ estimate now follows from
$\|u\|_{L^p(K_0)}\leq |K_0|^{1/p}\|u\|_{L^\infty(K_0)}$.
In particular,
\[
 \sum_{r\ne\ell}
 \|T_{J_\ell}(\theta e^{\pm\sqrt{-1}\lambda_r\cdot})\|_{L^p(K_0)}^p
 \leq C_M\left(\ell2^{-MpJ_\ell}
       +\sum_{r>\ell}2^{-MpJ_r}\right)
 \leq C'_M2^{-MpJ_*}.
\]
Choosing $J_*$ sufficiently large, with the lower bound independent of
$N$ and $\ell$, and using the rearranged $p$-subadditivity
\[
 \|u+v\|_{L^p}^p\geq\|u\|_{L^p}^p-\|v\|_{L^p}^p,
\]
together with \eqref{eq:lacunary-resonant-lower}, we obtain
\begin{equation}\label{eq:lacunary-unlocalized-block}
 \|T_{J_\ell}g_N\|_{L^p(K_0)}\gtrsim N^{-1/2},
 \qquad1\leq\ell\leq N.
\end{equation}

It remains to insert the interior cutoff. From
\eqref{eq:lacunary-density-L1},
\[
 |H_N(t,x')|\lesssim(1+|t|)^k
\]
uniformly in $N$. The function $(1-\rho)H_N$ vanishes in a fixed
neighborhood of $K_0\times U_0$ and has at most polynomial growth.
The separated-support Schwartz-tail estimate used in
\eqref{eq:ridge-profile-remainder} therefore gives, for every $B>0$,
\[
 \sup_{x\in K_0\times U_0}
 |\Delta_J[(1-\rho)H_N](x)|\lesssim2^{-JB}.
\]
We choose $B>m$ and enlarge $J_*(N)$ until
\[
 C2^{-J_1(B-m)}\leq \varepsilon_0 N^{-1/2},
 \qquad J_1=J_*+L,
\]
with $\varepsilon_0>0$ a sufficiently small fixed constant. Then, simultaneously
for every $\ell$, the cutoff error multiplied by $2^{J_\ell m}$ has
$L^p(K_0\times U_0)$ quasi-norm at most $\varepsilon_0N^{-1/2}$. Combining this
estimate with \eqref{eq:lacunary-unlocalized-block} and using
$p$-subadditivity gives
\begin{equation*}
 2^{J_\ell m}\|\Delta_{J_\ell}(\rho H_N)\|_{L^p(\mathbb{R}^d)}
 \gtrsim N^{-1/2},
 \qquad1\leq\ell\leq N.
\end{equation*}
Consequently, for every $0<q<2$, the localization inequality
\eqref{eq:besov-interior-localization} gives
\begin{align*}
 \|H_N\|_{B_{p,q}^{k+1}(\Omega)}
 &\gtrsim\|\rho H_N\|_{B_{p,q}^{k+1}(\mathbb{R}^d)}\\
 &\gtrsim
 \left(\sum_{\ell=1}^N N^{-q/2}\right)^{1/q}
 =N^{1/q-1/2}\longrightarrow\infty.
\end{align*}
Together with \eqref{eq:lacunary-variation-bound}, this proves
\[
 \mathcal L_1(\mathbb{D})\not\hookrightarrow B_{p,q}^{k+1}(\Omega),
 \qquad0<q<2,
\]
and completes the proof.
\end{proof}

\section{Conclusion}\label{sec:conclusion}

We have established embeddings in both directions between isotropic
quasi-Banach Besov spaces and the variation space generated by a normalized
shallow $\operatorname{ReLU}^k$ ridge dictionary. For $0<p\leq1$, the
forward embedding holds above the smoothness threshold $k+d/p$ for every
$0<q\leq\infty$, and at the threshold when $0<q\leq1$. A rescaled-bump
family shows that the smoothness threshold cannot be lowered. For
$0<p<1$, we proved
$\mathcal L_1(\mathbb{D})\hookrightarrow B_{p,2}^{k+1}(\Omega)$. Separated-ridge
and lacunary-ridge families show, respectively, that the smoothness
$k+1$ and the fine index $2$ cannot be improved. Combining the forward
embedding with \eqref{eq:SiegelXu} also gives the stated $L^r$ best
$n$-term estimate for every $2\leq r<\infty$.

The sharp isotropic theory established here opens several concrete
directions. Natural next problems are to determine the embedding
thresholds for anisotropic and mixed-smoothness scales and for other
activation functions, and to combine the present representation estimates
with PDE-specific stability bounds for end-to-end discretization and
training analysis.

\end{document}